\newtheorem{thm}{Theorem}[section]
\newtheorem{lemme}[thm]{Lemma}
\newtheorem{prop}[thm]{Proposition}
\newtheorem{coro}[thm]{Corollary}
\newtheorem*{thmA}{Theorem A}
\newtheorem*{thmB}{Theorem B}
\theoremstyle{remark}\newtheorem{rem}[thm]{Remark}
\def\B{{\mathbb{B}}}
\def\D{{\mathbb{D}}}
\def\R{{\mathbb{R}}}
\def\C{{\mathbb{C}}}
\def\N{{\mathbb{N}}}
\numberwithin{equation}{section} 
\numberwithin{figure}{section} 
\numberwithin{table}{section} 
\begin{document}
	\title{Wild boundary behaviour of holomorphic functions in domains of $\mathbb{C}^N$}
	\author{S. Charpentier, \L . Kosi\'nski}
	\subjclass[2010]{Primary 32A10 ; Secondary 32A40}
	\keywords{Universality, Several complex variables, Boundary behaviour, Labyrinth}
	\thanks{The first author is partly supported by the grant ANR-17-CE40-0021
		of the French National Research Agency ANR (project Front). The second author is partly supported by the NCN grant SONATA BIS no. 2017/26/E/ST1/00723}
	\address{Stéphane Charpentier, Institut de Mathematiques, UMR 7373, Aix-Marseille
		Universite, 39 rue F. Joliot Curie, 13453 Marseille Cedex 13, France}
	\email{stephane.charpentier.1@univ-amu.fr}
	\address{\L ukasz Kosi\'nski, Institute of Mathematics, Jagiellonian University, \L ojasiewicza 6, 30-348 Krak\'ow, Poland}
	\email{lukasz.kosinski@im.uj.edu.pl}
	
\begin{abstract}Given a domain of holomorphy $D$ in $\mathbb{C}^N$, $N\geq 2$, we show that the set of holomorphic functions in $D$ whose cluster sets along any finite length paths to the boundary of $D$ is maximal, is residual, densely lineable and spaceable in the space $\mathcal{O}(D)$ of holomorphic functions in $D$. Besides, if $D$ is a strictly pseudoconvex domain in $\mathbb{C}^N$, and if a suitable family of smooth curves $\gamma(x,r)$, $x\in bD$, $r\in [0,1)$, ending at a point of $bD$ is given, then we exhibit a spaceable, densely lineable and residual subset of $\mathcal{O}(D)$, every element $f$ of which satisfies the following property: For any measurable function $h$ on $bD$, there exists a sequence $(r_n)_n \in [0,1)$ tending to $1$, such that
	\[
	f\circ \gamma(x,r_n) \rightarrow h (x),\,n\rightarrow \infty,
	\]
for almost every $x$ in $bD$.
\end{abstract}
	
	\maketitle

\section{Introduction}

Let $\mathbb{B}_N=\{z\in \C^N,\,|z|<1\}$ denote the open unit ball of $\mathbb{C}^N$, $N\geq 2$, and $\D$ the open unit disc of the complex plane. If $D$ is a domain in $\C^N$, $N\geq 1$, $\mathcal{O}(D)$ will stand for the space of holomorphic functions endowed with the locally uniform convergence topology.

In 2000, Boivin, Gauthier and Paramonov proved that, given a domain $\Omega \subsetneq \R^N$ whose boundary has no component consisting of a single point and a homogeneous complex elliptic operator $L$ in $\R^N$, there exists an $L$-analytic function $f$ in $\Omega$ - \emph{i.e} a solution $f$ to $Lf=0$ on $\Omega$ - satysfing the following property: For any $\zeta$ in the boundary $b\Omega$ of $\Omega$ and any continuous path $\gamma:[0,1)\rightarrow \Omega$ with $\gamma(r)\rightarrow \zeta$ as $r\rightarrow 1$, the cluster set of $f$ along $\gamma$ is maximal (\emph{i.e.} equal to $\C \cup \{\infty\}$) \cite[Theorem 5]{BoiGauPar}. We recall that the \emph{cluster set 
of $f$ along $\gamma$} is defined as the set
\[
\left\{c\in \C\cup \{\infty\}:\, f\circ \gamma(r_n)\rightarrow c\text{ for some }(r_n)\text{ tending to }1\right\}.
\]
This result was completed and extended in \cite{BBCPCA} where it was proven that the set of such above functions contains a dense vector subspace of the space of $L$-analytic functions on $\Omega$ endowed with the locally uniform convergence topology. In particular, the previous results apply to $\mathcal{O}(D)$ where $D$ is a domain in the complex plane (with $L=\bar{\partial}$) and to the space of functions harmonic in domains of $\R^N$ (with $L=\Delta$), and thus improved earlier works, for instance in the unit disc. We refer to the survey \cite{Prado} for an overview on the topic of ($L$)-holomorphic functions with maximal cluster sets before 2008. We shall also mention \cite{BBCPRMI} where the authors are interested in the \emph{dense lineability} and the \emph{spaceability} (see below for the definition) in $\mathcal{O}(\D)$ of the set of so-called \emph{universal series} (see \cite{Nesto}) with maximal cluster sets along any path to the boundary, and \cite{charp} where functions holomorphic in $\D$ with a \emph{universal} property implying the maximality of cluster sets along any such paths are exhibited and studied more specifically.

Despite the rather large degree of generality of the previous results, it seems that the natural problem of exhibiting holomorphic functions in domains of $\C^N$, $N\geq 2$, with maximal cluster sets along continuous paths to the boundary has been left open till now. Actually polynomial and holomorphic approximation in $\C^N$ makes the problem more involved in this setting. Indeed, the existence of a function $f$ in $\mathcal{O}(D)$ with maximal cluster set along every path to the boundary clearly imposes on $D$ to be a domain of holomorphy, hence to be pseudoconvex by Oka's solution to the Levi problem. Moreover it turns out that such a function cannot exist in general: no holomorphic function in $\B_N$ can be unbounded along \emph{every} path to the boundary \cite[Theorem 3]{Glob-Stout}.

The first aim of this paper is to address this problem (Theorem A below). Let $D$ be a pseudoconvex domain in $\C^N$. From now on, a \emph{finite length path to $bD$} refers to as a continuous piecewise $\mathcal{C}^1$ path $\gamma:[0,1)\rightarrow D$, with finite length, such that $\gamma(r)\rightarrow \zeta \in bD$ as $r\rightarrow 1$. We shall also recall that a subset of a topological vector space $X$ is \emph{residual} if it contains a dense countable intersection of open sets, \emph{densely lineable} if it contains apart $0$ a dense subspace of $X$ and \emph{spaceable} if it contains apart $0$ a closed infinite dimensional subspace of $X$ (see \cite{Aron,Bernal}).

\begin{thmA}Let $D$ be a pseudoconvex domain in $\C^N$. There exists a set $\mathcal{V}$ residual in $\mathcal{O}(D)$ whose every element $f$ satisfies the following property: for any finite length path $\gamma$ to $bD$ and any complex number $c\in \C$, there exists a sequence $(r_n)_n\subset [0,1)$, $r_n\rightarrow 1$ as $n\rightarrow \infty$, such that
	\[
	f\circ\gamma (r_n)\rightarrow c ,\, n\rightarrow \infty.
	\]
Moreover $\mathcal{V}$ is densely lineable and spaceable in $\mathcal{O}(D)$.
\end{thmA}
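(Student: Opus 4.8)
My plan is to build a \emph{labyrinth} near $bD$ and then run a Baire category argument, the finite length hypothesis being used precisely to force every admissible path to meet the labyrinth cofinally.

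\smallskip
\noindent\textbf{Step 1: a labyrinth lemma.} Fix a smooth strictly plurisubharmonic exhaustion $\varphi\colon D\to\R$ with each $K_m:=\{\varphi\le m\}$ compact and $\mathcal{O}(D)$-convex (pseudoconvexity), arranged so that consecutive level sets $\{\varphi=n\}$, $\{\varphi=n+1\}$ are Euclidean-close for $n$ large. I would construct compacta $W_n\subset\{n\le\varphi\le n+1\}$, each a finite union of small fattened pieces of level sets (``plaques''), such that: \emph{(i)} $K_m\cup W_{n_1}\cup\dots\cup W_{n_r}$ is $\mathcal{O}(D)$-convex for all $m$ and all finite families of indices whose plaques lie in $\{\varphi>m\}$ (this is arranged by taking the plaques individually $\mathcal{O}(D)$-convex and in Kallin-type general position); and \emph{(ii)} there is $\delta_0>0$ such that every piecewise-$\mathcal{C}^1$ path in $D$ with $\varphi\to+\infty$ along it and of length $<\delta_0$ meets $W_n\cup W_{n+1}$ for all but finitely many $n$. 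Property (ii) is obtained by placing the ``channels'' (the gaps of $W_n$) and those of $W_{n+1}$ at mutual Euclidean distance $\ge\delta_0$, using finitely many alternating positions: then, letting $t_n$ be the last parameter at which such a path is at level $n$, a path missing both $W_n$ and $W_{n+1}$ would travel from a channel of $W_n$ to a channel of $W_{n+1}$ and hence have length $\ge\delta_0$ on $[t_n,t_{n+1}]$ alone. Since a finite length path to $bD$ has tails of arbitrarily small length, it follows, writing $V_k:=W_{2k}\cup W_{2k+1}$, that \emph{every finite length path to $bD$ meets $V_k$ for all sufficiently large $k$}, the meeting points tending to $bD$ and the corresponding parameters to $1$.

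\smallskip
\noindent\textbf{Step 2: residuality.} Let $Q\subset\C$ be countable dense and, for $c\in Q$, an integer $p\ge1$, and $k_0\in\N$, set $A(c,p,k_0)=\{f\in\mathcal{O}(D):\exists\,k\ge k_0,\ \sup_{V_k}|f-c|<1/p\}$. Each $A(c,p,k_0)$ is open, and it is dense: given $g\in\mathcal{O}(D)$, a compact $L$ and $\varepsilon>0$, pick $m$ with $L\subset K_m$, then $k\ge k_0$ so large that $V_k\cap K_m=\emptyset$, and apply Oka--Weil on the $\mathcal{O}(D)$-convex set $K_m\cup V_k$ to the function equal to $g$ near $K_m$ and to $c$ near $V_k$. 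Hence $\mathcal{V}:=\bigcap_{c,p,k_0}A(c,p,k_0)$ is residual. For $f\in\mathcal{V}$, a finite length path $\gamma$ to $bD$ and $c\in\C$: choose $c_j\to c$ in $Q$; for each $j$, membership of $f$ in $\bigcap_{k_0}A(c_j,j,k_0)$ gives infinitely many $k$ with $\sup_{V_k}|f-c_j|<1/j$, and intersecting this \emph{infinite} set with ``$\gamma$ meets $V_k$ for \emph{all} large $k$'' (here the cofinality from Step 1 is essential) yields $k_j\to\infty$ and $r_j$ with $\gamma(r_j)\in V_{k_j}$ and $|f(\gamma(r_j))-c_j|<1/j$; since $\gamma(r_j)\to bD$ we get $r_j\to1$, and then $f(\gamma(r_j))\to c$. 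So $\mathcal{V}$ has the required property.

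\smallskip
\noindent\textbf{Step 3: dense lineability and spaceability.} Partition $\N=\bigsqcup_{j\ge1}S_j$ into infinite sets, fix a dense sequence of polynomials $(g_j)_{j\ge1}$, and, again by an exhaustion-plus-Oka--Weil construction, build $f_j\in\mathcal{O}(D)$ with $\sup_{K_j}|f_j-g_j|<2^{-j}$, with $\sup_{V_k}|f_j-w_k|<\eta_k$ for $k\in S_j$ (where $(w_k)_{k\in S_j}$ enumerates $Q$ with infinite repetition and $\eta_k\to0$), and with $\sup_{V_k}|f_j|$ extremely small for $k\notin S_j$. Then each $f_j\in\mathcal{V}$, and for any finite nonzero combination $\sum a_jf_j$, choosing $j_0$ with $a_{j_0}\ne0$, on the walls $V_k$ with $k\in S_{j_0}$ one has $\sum a_jf_j=a_{j_0}w_k+o(1)$; since $\{a_{j_0}w_k:k\in S_{j_0}\}$ is dense in $\C$, the argument of Step 2 forces $\sum a_jf_j\in\mathcal{V}$. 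Thus $\operatorname{span}\{f_j\}$ is a dense subspace lying, apart from $0$, in $\mathcal{V}$, so $\mathcal{V}$ is densely lineable. Choosing in addition the errors so small (relative to $f_1,\dots,f_j$ on $K_j$) that $(f_j)$ is a basic sequence, the same domination on the walls of $S_{j_0}$ applies to every nonzero element of $\overline{\operatorname{span}}\{f_j\}$, which is therefore a closed infinite dimensional subspace contained in $\mathcal{V}\cup\{0\}$; hence $\mathcal{V}$ is spaceable.

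\smallskip
The main obstacle is Step 1: one must make the walls $W_n$ simultaneously ``thick'' enough that no finite length path to $bD$ can slip past infinitely many of them --- a quantitative statement about the metric geometry of the level sets of $\varphi$ near a possibly very irregular boundary, which is where the uniform constant $\delta_0$ is delicate for a general pseudoconvex domain --- and ``thin and generic'' enough that the unions with the exhaustion compacta stay $\mathcal{O}(D)$-convex, so that Oka--Weil approximation remains available. The remaining steps are then standard manipulations.
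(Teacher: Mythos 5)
Your overall architecture is the same as the paper's: a labyrinth of $\mathcal{O}(D)$-convex compacta that every finite length path to $bD$ must meet cofinally, followed by a Baire category argument via Oka--Weil, and then a lineability/spaceability upgrade. Two points deserve comment, one of emphasis and one of substance.

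First, your Step 1 is exactly the hard part, and your sketch does not prove it. The paper does not construct the labyrinth either: it invokes the main theorem of \cite{CharpKos} (stated as Theorem \ref{thm-CharpKos}), which supplies compacta $\Gamma_n\subset D_{n+1}\setminus\overline{D_n}$ with $\overline{D_n}\cup\bigcup_{j=n}^m\Gamma_j$ $\mathcal{O}(D)$-convex and with the property that any path crossing the $n$-th shell and omitting $\Gamma_n$ has length $>1$. Obtaining simultaneously the uniform length constant and the $\mathcal{O}(D)$-convexity of the unions, near the boundary of an \emph{arbitrary} pseudoconvex domain (where the shells between consecutive level sets of the exhaustion can be pinched and wildly irregular), is the content of a separate paper descending from Globevnik's construction; ``plaques in Kallin-type general position with channels at mutual distance $\ge\delta_0$'' names the desired conclusion rather than establishing it. Your Step 2 then matches the paper's proof of Theorem \ref{theo-finite-path-pseudoconvex} essentially verbatim, and is correct.

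Second, there is a genuine error in Step 3. You build a single family $(f_j)$ whose span is dense (each $f_j$ approximates $g_j$ on $K_j$ with $(g_j)$ dense) and then propose to ``choose the errors so small that $(f_j)$ is a basic sequence'' and conclude that $\overline{\operatorname{span}}\{f_j\}\subset\mathcal{V}\cup\{0\}$. This cannot work: the closed span of a dense set is all of $\mathcal{O}(D)$, which contains nonzero constants, and a constant is certainly not in $\mathcal{V}$. Dense lineability and spaceability require \emph{separate} constructions. The paper does the dense lineability with nested subsequences $(n_i^l)$ of wall indices (morally your partition $\N=\bigsqcup S_j$, and your finite-combination argument there is fine), and the spaceability with a different family built on a basic sequence $(e_n)$ generating a \emph{proper} closed infinite-dimensional subspace, using Menet's Fr\'echet-space machinery. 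Even after that repair, your domination estimate for an infinite combination $\sum a_jf_j$ needs two ingredients you do not supply: a uniform bound $|a_j|\le K$ (which comes from the basic sequence property) and a control of $\sup_{V_k}|f_j|$ that is summable over $j$ for each fixed large $k$ --- including the indices $j$ for which the construction of $f_j$ has ``not yet reached'' the shell containing $V_k$; this is what the paper's double-indexed family $(f_{i,j})$ and conditions (i)--(iv) are for.
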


Observe that, since $L\cap \B_N$ is the image of a finite length path for any complex line $L$, it implies that the set of those $f\in \mathcal{O}(\B_N)$ such that $f_{L\cap \B_N}$ is non-extendable for any $L$ is residual in $\mathcal{O}(D)$, an answer to a question posed in \cite[Page 8]{ACGMN}.

Let us make some comments on the proofs in \cite{BBCPCA,BoiGauPar}. The functions with maximal cluster sets along paths are built as (infinite) sums of $L$-analytic functions in $\Omega$ which are simultaneously small on some compact subsets of $\Omega$ and arbitrarily close to any given complex number on \emph{suitable} other compact subsets. Here \emph{suitable} means that any path to the boundary has to intersect all but finitely many of these sets. To perform this construction, it is built in \cite{BoiGauPar} a sequence $(G_n)$ of sets in $\Omega$ with the property that any path to $b\Omega$ intersects all but finitely many $G_n$'s and such that each $K_n \cup G_n$ is a Roth-Keldysh-Lavrentiev set - on which a Runge-type theorem for $L$-analytic function is valid - where $(K_n)$ is some exhaustion of $\Omega$ by compact sets (see \cite[Lemma 3.2]{BBCPCA} for a precise statement). In 2015, for completely different purpose, Globevnik succeeded in building a \emph{labyrinth}, that is a sequence $(\Gamma_n)$ in $\B_N$ with exactly the same property as the sequence $(G_n)$ above, except that only those paths to $b\B_N$ with \emph{finite length} have to intersect all but finitely many $\Gamma_n$ and that $K_n\cup \Gamma_n$ is now polynomially convex \cite{Glob-annals}. This major step allowed him to build a function $f\in \mathcal{O}(\B_N)$ unbounded along any finite length path to $b\B_N$, that he used to exhibit a complete, closed complex hypersurface in $\B_N$, and thus to give a positive answer to a question posed by P. Yang in 1977 \cite{Yang1,Yang2}. He extended this to any pseudoconvex domains of $\C^N$ \cite{Glob-math-annalen}. Later the results of Globevnik from \cite{Glob-annals,Glob-math-annalen} were proven by Alarc\'on \cite{Ala} for any Stein manifold endowed with a Riemannian metric.

The construction of such labyrinths was much simplified in $\B_N$ \cite{AlaGlobLop} and also in any pseudoconvex domains \cite{CharpKos}. With this in hand, we will then be able to prove Theorem A.

\medskip{}

Furthermore, it turns out that holomorphic functions in $\C^N$ with some erratic boundary behaviour were already exhibited in 2005 by Bayart \cite[Theorem 1]{Bay}, who showed the following.

\begin{thm}The set of functions in $\mathcal{O}(\B_N)$ satisfying that, given any measurable function $h$ on $b\B_N$, there exists an increasing sequence $(r_n)_n\subset [0,1)$, $r_n\rightarrow 1$, such that
	\[
	f(r_nx)\rightarrow h(x)\text{ for almost every }x \in b\B_N
	\]
is residual.
\end{thm}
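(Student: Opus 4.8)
The plan is to prove Theorem~1 by a Baire category (``universality criterion'') argument, reduced to a single approximation lemma which is the real difficulty when $N\ge2$. Write $\sigma$ for the normalized surface measure on $b\B_N$ and let $\mathcal{M}$ be the space of $\sigma$-a.e. equivalence classes of measurable functions on $b\B_N$, endowed with the complete separable metric of convergence in measure, $d(u,v)=\int_{b\B_N}\min(|u-v|,1)\,d\sigma$. Fix a countable set $\{h_j\}_{j\ge1}\subset C(b\B_N)$ that is dense in $(\mathcal{M},d)$ (continuous functions are dense in measure and $C(b\B_N)$ is separable). I will use the elementary fact that $u_n\to u$ in measure iff every subsequence of $(u_n)$ has a further subsequence converging $\sigma$-a.e.; this is what upgrades ``convergence in measure along suitable radii'' to ``a.e. convergence along an increasing sequence of radii''. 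So it suffices to produce a residual set $\mathcal{V}\subset\mathcal{O}(\B_N)$ such that every $f\in\mathcal{V}$ satisfies: for each $j$ there are radii $\rho_p^{(j)}\to1$ with $d\bigl(f(\rho_p^{(j)}\,\cdot)|_{b\B_N},h_j\bigr)\to0$ as $p\to\infty$. Indeed, given an arbitrary measurable $h$, I would pick $j_k$ with $d(h_{j_k},h)<1/k$, then $p_k$ with $d\bigl(f(\rho_{p_k}^{(j_k)}\,\cdot),h_{j_k}\bigr)<1/k$ and $\rho_{p_k}^{(j_k)}>1-1/k$; the radii $r_k:=\rho_{p_k}^{(j_k)}$ then tend to $1$ and $f(r_k\,\cdot)\to h$ in measure, and passing to an increasing and then to an a.e.-convergent subsequence gives the statement of the theorem.

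To build $\mathcal{V}$, put for $j,p\in\N$
\[
\mathcal{U}_{j,p}=\bigcup_{1-1/p<r<1}\Bigl\{f\in\mathcal{O}(\B_N):\ d\bigl(f(r\,\cdot)|_{b\B_N},\,h_j\bigr)<1/p\Bigr\}.
\]
For fixed $r<1$ the sphere $rb\B_N$ is a compact subset of $\B_N$, so $f\mapsto f(r\,\cdot)|_{b\B_N}$ is continuous from $\mathcal{O}(\B_N)$ to $(C(b\B_N),\|\cdot\|_\infty)$, and $u\mapsto d(u,h_j)$ is $1$-Lipschitz for $\|\cdot\|_\infty$; hence each $\mathcal{U}_{j,p}$ is open. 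If every $\mathcal{U}_{j,p}$ is dense, then $\mathcal{V}:=\bigcap_{j,p}\mathcal{U}_{j,p}$ is residual in the Fréchet space $\mathcal{O}(\B_N)$, and for $f\in\mathcal{V}$, fixing $j$ and letting $p\to\infty$ yields radii $\rho_p^{(j)}\in(1-1/p,1)$ with $d(f(\rho_p^{(j)}\,\cdot),h_j)<1/p$, exactly as required above. Thus everything reduces to the density of each $\mathcal{U}_{j,p}$, i.e. to the following \emph{Key Lemma}: for every $g\in\mathcal{O}(\B_N)$, every compact $K\subset\B_N$, every $\eta>0$ and all $j,p$, there are $f\in\mathcal{O}(\B_N)$ and $r\in(1-1/p,1)$ with $\sup_K|f-g|<\eta$ and $d\bigl(f(r\,\cdot)|_{b\B_N},h_j\bigr)<1/p$.

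To attack the Key Lemma I would choose $\rho_0<1$ with $K\subset\rho_0\overline{\B}_N$, approximate $g$ uniformly on $\rho_0\overline{\B}_N$ by a holomorphic polynomial $P$ (Runge), fix $r\in(\max(\rho_0,1-1/p),1)$, write $f=P+\varphi$, and replace $h_j$ by the continuous function $\widetilde h:=h_j-P(r\,\cdot)|_{b\B_N}$; the lemma then follows from the statement: \emph{for every $\widetilde h\in C(b\B_N)$, every $0<\rho_0<r<1$ and every $\delta>0$ there is $\varphi\in\mathcal{O}(\B_N)$ with $\sup_{\rho_0\overline{\B}_N}|\varphi|<\delta$ and $\sigma\bigl(\{x\in b\B_N:\ |\varphi(rx)-\widetilde h(x)|>\delta\}\bigr)<\delta$.} For $N=1$ this is immediate from Mergelyan's theorem on the polynomially convex compact set $\rho_0\overline{\D}\cup\{re^{i\theta}:\theta\in[\beta,2\pi-\beta]\}$, which has empty interior off $\rho_0\overline{\D}$: one approximates the function equal to $0$ on the disc and to $\widetilde h(\,\cdot/r)$ on the circular arc, choosing $\beta$ so small that the arc already carries all but $\delta$-measure of the circle.

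For $N\ge2$ this last step is the crux, and I expect it to be the main obstacle. A subset of the sphere $rb\B_N$ of positive $\sigma$-measure is never polynomially convex and is never a set of uniform approximation for continuous, non-CR data, so the one-variable trick does not transfer: the function $\varphi$ must instead carry large ``spikes'' supported on a set of $\sigma$-measure $<\delta$ while staying small on $\rho_0\overline{\B}_N$ and within $\delta$ of $\widetilde h$ elsewhere. I would try to build such a $\varphi$ by covering $b\B_N$ by small caps on which $\widetilde h$ is nearly constant, assembling holomorphic functions of ball-algebra (peak-function) type adapted to these caps — such functions are automatically exponentially small on $\rho_0\overline{\B}_N$ — and tuning the scale of the caps against the concentration of these functions so that the region where their overlaps spoil the approximation has measure below $\delta$; alternatively, one can try to establish directly, by a Hahn--Banach / annihilating-measure argument exploiting the structure of measures on $b\B_N$, that the restrictions to $rb\B_N$ of polynomials small on $\rho_0\overline{\B}_N$ are dense in measure in $C(b\B_N)$. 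Making this density statement work for $N\ge2$ is where the genuine content of the theorem lies; the rest is the soft Baire-category machinery above.
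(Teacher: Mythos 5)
Your Baire--category reduction is sound and is essentially the same soft machinery the paper (following Bayart) uses: the open sets $\mathcal{U}_{j,p}$, the density of continuous functions in measure, and the passage from convergence in measure along $r_k\to1$ to a.e.\ convergence along a subsequence are all fine. But you have correctly located the crux --- the Key Lemma producing $\varphi\in\mathcal{O}(\B_N)$ small on $\rho_0\overline{\B}_N$ and within $\delta$ of $\widetilde h(\cdot/r)$ on all but a $\delta$-measure subset of $rb\B_N$ --- and then you do not prove it; you only sketch two possible attacks and acknowledge that neither is carried out. That is a genuine gap, and neither sketch is likely to close it as stated. The peak-function route requires summing many functions of ball-algebra type over a fine cover of the sphere; such functions are small but not zero away from their peak sets, and controlling the accumulated error of the overlaps (so that it stays below $\delta$ outside a set of measure $\delta$ while each summand still achieves the prescribed value $c_i$ on its own cap) is exactly the hard analysis you would need to supply. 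The Hahn--Banach route is worse: convergence in measure is not a locally convex topology (the dual of $L^0(\sigma)$ is trivial), so there is no annihilating-measure criterion for ``density in measure'' to exploit.

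The paper fills this gap with a much more elementary geometric device (Lemma \ref{lem:cups} plus Corollary \ref{cor:pol}), and it sidesteps the obstruction you raise about positive-measure subsets of the sphere by never approximating on a subset of the sphere at all. One covers all but $\delta$ of the measure of $rb\B_N$ by finitely many pairwise disjoint \emph{closed caps} of $r\overline{\B}_N$ --- solid compact convex pieces cut off by real hyperplanes --- chosen small enough that $\widetilde h(\cdot/r)$ is within $\delta$ of a constant $c_i$ on the trace of the $i$-th cap on the sphere. Each closed cap is convex, hence polynomially convex, and Kallin's separation lemma (applied with the real-linear functional defining each cutting hyperplane) shows that the union of the caps together with $\rho_0\overline{\B}_N$ is polynomially convex. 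The function equal to $0$ near $\rho_0\overline{\B}_N$ and to $c_i$ near the $i$-th cap is holomorphic in a neighborhood of this union, so the Oka--Weil theorem produces the desired polynomial $\varphi$ in one stroke. The data being approximated is locally constant on genuinely polynomially convex compacta, which is why no Mergelyan-type theorem on the sphere, no peak functions, and no duality are needed. Without this (or an equivalent) argument your proof is incomplete precisely at the step you yourself identify as carrying the whole content of the theorem.
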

In fact, a more difficult result is proven showing that such $f$ can have an arbitrarily slow radial growth to $b\B_N$.
This result is in contrast with the existence of holomorphic functions with radial limits equal almost everywhere on $b\B_N$ to a given measurable function (see \cite{KahKatz} for the unit disc, \cite{Dup,Iordan} for the unit ball, and \cite{CharpDupMoun} for strictly pseudoconvex domains in $\C^N$ with $\mathcal{C}^{\infty}$ boundary). In \cite{Bay,Iordan} the constructions are based on a Mergelyan type theorem for certain star shaped compact subsets of the closed ball, proven in \cite{HakSib}. 

The second aim of this paper is to prove a simpler and natural polynomial approximation result in strictly convex sets of $\C^N$ - somewhat inspired by the construction of labyrinths, to extend it to strictly pseudoconvex domains thanks to a deep embedding theorem due to Diederich, Forn\ae ss and Wold \cite{Died-For-Wold}, and finally to improve and extend, in a rather transparent fashion, Bayart's result to strictly pseudoconvex domains.

\begin{thmB}[Theorem \ref{thmBrestated} below for a precise statement]Let $D$ be a strictly pseudoconvex domain of $\C^N$ and let $\gamma: bD\times[0,1]\to \overline D$ be a smooth map. Under some assumptions on $\gamma$, there is a residual set $\mathcal{W}$ in $\mathcal{O}(D)$ every element $f$ of which satisfies that given any measurable function $h:bD\to \mathbb C$, there exists a sequence $(r_n)_n\subset [0,1)$, $r_n\rightarrow 1$, such that
	\[
	f\circ\gamma(x,r_{n}) \to h(x)
	\]
for almost every $x\in bD$. Moreover, the set $\mathcal{W}$ is densely lineable and spaceable in $\mathcal{O}(D)$.
\end{thmB}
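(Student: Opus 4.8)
The plan is to run a Baire-category argument in $\mathcal{O}(D)$, producing the residual set $\mathcal W$ as a countable intersection of dense open sets, each of which encodes the possibility of approximating a prescribed target function $h$ along the curves $\gamma(\cdot,r)$ up to precision $1/k$ on a set of measure at least $|bD|-1/k$, while leaving an ever-larger piece of an exhaustion $(K_n)$ of $D$ untouched. The engine of the whole scheme is a polynomial approximation lemma, which I would isolate first: for a strictly convex domain, given a compact $K\subset D$, an $\varepsilon>0$, a measurable (indeed, after truncation, simple or continuous) function $h$ on the sphere-like boundary, and a radius $r$ close to $1$, there is a polynomial $P$ with $\|P\|_K<\varepsilon$ and $|P\circ\gamma(x,r)-h(x)|<\varepsilon$ for $x$ outside a small-measure set. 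This is proved by realizing $K$ together with the ``shell'' $\gamma(bD\times\{r\})$ as (essentially) a polynomially convex compact set with the right topological properties, so that a Mergelyan/Oka–Weil-type theorem applies — this is the role played in the introduction by the labyrinth-type constructions and by the Hakim–Sibony / Hörmander–Wermer circle of ideas, but here in a ``transparent'' form thanks to strict convexity. The assumptions on $\gamma$ alluded to in the statement are exactly what is needed to guarantee that for $r$ near $1$ the image $\gamma(bD\times\{r\})$ is a nice hypersurface-like set disjoint from $K$, that the map $x\mapsto\gamma(x,r)$ is a diffeomorphism onto its image (so that measure-zero sets pull back to measure-zero sets and vice versa), and that the union remains polynomially convex.

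Next I would transfer this from the model strictly convex case to an arbitrary strictly pseudoconvex $D$ by invoking the Diederich–Forn\ae ss–Wold embedding theorem: a strictly pseudoconvex domain with smooth boundary embeds properly and holomorphically into a strictly convex domain (even into a ball) in some $\mathbb C^M$, with the embedding extending smoothly to the closure and mapping $bD$ diffeomorphically onto a piece of the target boundary. Pulling back polynomials on the target to holomorphic functions on $D$, and pulling back the curve family $\gamma$, one obtains the same approximation statement with ``polynomial'' replaced by ``function in $\mathcal O(D)$'', which is all that the Baire argument in $\mathcal O(D)$ requires. Care is needed that the measure class on $bD$ is respected under the diffeomorphism; this is where the smoothness up to the boundary of the embedding is used.

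With the approximation lemma in hand the residuality argument is standard: fix a countable dense family $(h_j)$ in, say, $L^1(bD)$ or a countable family of simple functions dense in the space of measurable functions for convergence in measure, let $(K_n)$ exhaust $D$, and set $U_{j,k,n}$ to be the set of $f\in\mathcal O(D)$ for which there exists $r\in(1-1/k,1)$ with $|\{x: |f\circ\gamma(x,r)-h_j(x)|\ge 1/k\}|<1/k$ and — to keep control — some mild normalization on $K_n$; each $U_{j,k,n}$ is open (the conditions involve only finitely much data and continuity of $f\mapsto f\circ\gamma(\cdot,r)$) and dense (add a small correcting function from the lemma to a given $g$). Then $\bigcap U_{j,k,n}$ is residual, and a diagonalization over $j,k,n$ shows every $f$ in it has the stated almost-everywhere convergence property for \emph{every} measurable $h$, not just the $h_j$'s — here one approximates an arbitrary $h$ by the $h_j$'s and extracts, via a further diagonal subsequence, a single sequence $r_n\to1$ along which $f\circ\gamma(\cdot,r_n)\to h$ almost everywhere (using that convergence in measure along a subsequence yields a.e.\ convergence along a sub-subsequence). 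Finally, dense lineability and spaceability follow from the general principles in \cite{Aron,Bernal}: the property is clearly preserved under adding any fixed function from $\mathcal O(D)$ and is ``residual'', so a standard argument (e.g.\ building a basic sequence whose finite linear combinations inherit the property, exploiting that one coordinate can be made to dominate) upgrades residuality to dense lineability and spaceability — this part is essentially the same as in Theorem A and in \cite{BBCPRMI}.

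The main obstacle I anticipate is the approximation lemma itself, specifically verifying polynomial convexity and the topological hypotheses of the Mergelyan-type theorem for the union of $K_n$ with the curved ``shell'' $\gamma(bD\times\{r\})$, and pinning down precisely which conditions on $\gamma$ make this work while still being general enough to cover the natural examples (radial-type curves, normal flows to the boundary, etc.); everything downstream — the embedding transfer and the Baire argument — is comparatively routine.
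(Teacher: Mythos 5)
Your plan follows essentially the same route as the paper: a polynomial approximation lemma in strictly convex domains obtained by making a compact set together with most of the boundary ``shell'' polynomially convex (the paper does this with finitely many disjoint closed caps plus Kallin's lemma), a transfer to strictly pseudoconvex domains via the Diederich--Forn\ae ss--Wold theorem, a Baire-category/diagonalization argument \`a la Bayart, and lineability/spaceability exactly as in Theorem A. The only caveat is that the cited Diederich--Forn\ae ss--Wold result is a \emph{local} exposing-points statement (an embedding into the ball making a neighbourhood of one boundary point strictly convex), so the paper works with a finite cover of such local convexifications rather than the single global embedding into a strictly convex domain that you describe; this does not change the substance of the argument.
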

If $D=\B_N$ our proof can be combined with that of Bayart (or Iordan \cite{Iordan}) to get that any functions as in Theorem B can have an arbitrary radial growth.


The paper is organized as follows. The next section introduces the necessary background in complex analysis in several variables, that readers from Operator Theory or one dimensional complex variable may not be familiar with. Section \ref{sec-thmA} is devoted to the proof of Theorem A, and Section \ref{sec-thmB} to that of Theorem B.


\section{Background}\label{background}

In this section, we briefly introduce the necessary background in several complex variables.

\smallskip{}

A domain $D$ in $\C^N$ is said to be \emph{pseudoconvex} if there is a continuous plurisubharmonic function $u:D \to \R$ such for any $c\in \R$ the set $\{z\in D:\,u(z)<c\}$ is relatively compact. We refer to \cite[Chapter II]{Horm} for several equivalent definitions. As a solution to the Levi problem, Oka, Bremermann and Norguet established that the class of pseudoconvex domains coincides with that of domains of holomorphy \cite[Chapter IV]{Horm}.

\smallskip{}

%


A \emph{strictly} (or \emph{strongly}) pseudoconvex domain $D$ in $\C^N$ is a bounded domain with $\mathcal{C}^2$ boundary such that the Levi form of some defining function $\rho$ is positive definite on the complex tangent space (see \cite[Chapter 3]{Krantz} or \cite[Chapter 3]{Ohsawa}). The next theorem will be crucial for the proof of Theorem B. It is a particular case of \cite[Theorem 1.1]{Died-For-Wold}.

\begin{thm}\label{thm-Died}Let $D\subset \C^N$ be a strictly pseudoconvex domain. For any $x\in bD$, there is a ball $B_x$ centred at $x$ and a holomorphic embedding $k_x:\overline D\to \overline \B_N$ with $k_x(x) = (1,0)\in \C\times \C^{N-1}$ such that each point of $k_x(B_x \cap bD)$ is a point of strict convexity of $k_x(D)$.
\end{thm}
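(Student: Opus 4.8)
The plan is to deduce this from the full strength of \cite[Theorem 1.1]{Died-For-Wold}, which produces, for a strictly pseudoconvex domain $D$ and a prescribed boundary point $x$, a holomorphic embedding of a neighbourhood of $\overline D$ into $\C^N$ that straightens $bD$ near $x$ so strongly that the image becomes strictly convex in a full neighbourhood of the image of $x$. The only work left is to normalise the target so that it lands inside $\overline\B_N$ with $x$ mapped to the distinguished point $(1,0)$. First I would apply the cited theorem to obtain an embedding $\Phi$ defined on a neighbourhood of $\overline D$ such that $\Phi(D)$ is strictly convex near $\Phi(x)$. After an affine change of coordinates in $\C^N$ (a translation and a unitary rotation) we may assume $\Phi(x)=0$ and that the real tangent hyperplane to $b\Phi(D)$ at $0$ is $\{\mathrm{Re}\,z_1=0\}$, with $\Phi(D)$ lying on the side $\mathrm{Re}\,z_1<0$ locally; by strict convexity near $0$ there is a small $\epsilon>0$ so that $\Phi(B_x\cap \overline D)$, for a sufficiently small ball $B_x$ about $x$, lies in the half-space $\mathrm{Re}\,z_1<0$ and in fact in a small ball $B(-\epsilon e_1,\epsilon)$ internally tangent to $\{\mathrm{Re}\,z_1=0\}$ at $0$ — this is exactly the local strict convexity.

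Next I would compose with a contraction: since $\overline{\Phi(D)}$ is compact, pick $R>0$ with $\overline{\Phi(D)}\subset B(0,R)$, and set $k_x = A\circ \Phi$ where $A$ is the affine map $z\mapsto (z-c)/\lambda$ for suitable $c\in\C^N$ and $\lambda>0$ chosen so that $A(B(0,R))\subset \B_N$ and $A(0)=(1,0)$. Concretely, translating the ball $B(0,R)$ so that its boundary passes through a prescribed point and then scaling by $1/R$ (or slightly more) gives an embedding of $\overline{\Phi(D)}$ into $\overline\B_N$; a final rotation aligns the image of the boundary point with $e_1=(1,0)\in\C\times\C^{N-1}$, and one checks the rotation can be chosen to keep the tangency data consistent. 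Affine maps preserve convexity, so each point of $k_x(B_x\cap bD)$ remains a point of strict convexity of $k_x(D)$, and being a composition of an affine map with a holomorphic embedding, $k_x$ is again a holomorphic embedding of $\overline D$.

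The only genuine point requiring care — and the ``main obstacle'', though it is really a bookkeeping issue rather than a deep one — is to verify that the normalisations can be done \emph{simultaneously}: mapping $\Phi(x)$ to $(1,0)$, fitting $\overline{\Phi(D)}$ inside $\overline\B_N$, and ensuring $(1,0)$ actually lies on $b\B_N$ rather than in the interior (so that the statement ``$k_x(x)=(1,0)\in\C\times\C^{N-1}$'' makes sense as a boundary normalisation). This is achieved by first choosing the scaling factor $\lambda$ slightly larger than $R$ so that $\overline{\Phi(D)}$ maps strictly inside $\B_N$, then translating along $e_1$ by exactly the amount that brings $\Phi(x)$ to the sphere at the point $e_1$; since $\Phi(x)$ is a convex boundary point and the image of a small cap around it is contained in an internally tangent ball, this translation keeps the whole image inside $\overline\B_N$. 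Everything else — that the composition is an embedding, that strict convexity is affine-invariant, that a small enough $B_x$ exists — is immediate. Thus the theorem follows directly from \cite[Theorem 1.1]{Died-For-Wold} after these elementary affine adjustments.
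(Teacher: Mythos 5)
The paper does not actually prove this statement: it is introduced with the single sentence ``It is a particular case of \cite[Theorem 1.1]{Died-For-Wold}'', so the entire content is delegated to Diederich--Forn\ae ss--Wold. Your proposal tries to reconstruct the reduction, and in doing so it mislocates where the difficulty lies. You read \cite[Theorem 1.1]{Died-For-Wold} as producing only an embedding $\Phi$ of a neighbourhood of $\overline D$ into $\C^N$ whose image is strictly convex \emph{near} $\Phi(x)$, and you then claim that fitting $\overline{\Phi(D)}$ into $\overline\B_N$ with $\Phi(x)\mapsto(1,0)\in b\B_N$ is ``a bookkeeping issue''. It is not. For an affine map $z\mapsto (z-c)/\lambda$ to send $\overline{\Phi(D)}$ into $\overline\B_N$ while sending $\Phi(x)$ to a point of the unit sphere, there must exist a closed Euclidean ball containing all of $\overline{\Phi(D)}$ whose boundary sphere passes through $\Phi(x)$; equivalently, $\Phi(x)$ must be globally exposed (by spheres) for the whole image. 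Local strict convexity of $b\Phi(D)$ near $\Phi(x)$ gives no control on the rest of the image, which may well wrap around so that $\Phi(x)$ lies in the interior of the convex hull of $\overline{\Phi(D)}$; in that case no affine map of the kind you describe exists. Your own quantitative attempt shows the problem: after scaling $\overline{\Phi(D)}$ into $B(0,r)$ with $r<1$, the translation $v$ bringing the image of $x$ to $e_1$ satisfies $|v|\geq 1-r$, and the translated set sits in $B(v,r)\subset\overline\B_N$ only in the degenerate case where the image of $x$ already lies on the circumscribing sphere $bB(0,r)$ in the direction of $e_1$ --- which is exactly the exposedness you have not established. The internally tangent small ball you invoke controls only a cap near $\Phi(x)$, not the far part of the image after the translation.

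This global exposing of a prescribed boundary point is precisely the main theorem of \cite{Died-For-Wold}: their Theorem 1.1 already furnishes a holomorphic embedding of $\overline D$ into $\overline{\B_N}$ sending $x$ to a point of $b\B_N$ with the image touching the sphere only there, and the local strict convexification near $x$ (via the classical Narasimhan convexification of strictly pseudoconvex boundary points) is arranged within their construction. So either you quote the theorem in its correct, stronger form --- in which case the statement of Theorem \ref{thm-Died} follows after at most a rotation and there is nothing left to prove, which is what the paper does --- or you start from the weakened ``local convexification'' version, in which case your affine step has a genuine gap that cannot be closed by elementary means. As written, the proposal incorrectly presents the deep part of the cited theorem as a triviality and supplies an argument for it that does not work.
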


\smallskip{}

We will constantly make use of polynomial or holomorphic approximation. This is understood in terms of polynomial or holomorphic convexity, that we shall recall now. For this discussion, we refer to the book by Stout \cite{Stout}.

Let $D$ be a domain of $\C^N$ and $K$ a compact subset of $D$. The \emph{holomorphic hull of $K$ with respect to $D$} is defined by
\[
\widehat K_D=\{z\in D:\ |\varphi(z)|\leq \sup_K|\varphi|\text{ for every } \varphi\in \mathcal O(D)\}.
\]
It is well known that $\widehat K_D$ is a compact subset of $D$ provided that $D$ is pseudoconvex. $K$ is said to be \emph{holomorphically convex with respect to $D$} - or \emph{$\mathcal{O}(D)$-convex} - if $K=\widehat K_{D}$. If $D = \mathbb C^N$, then $\widehat K=\widehat K_{\mathbb C^N}$ is the \emph{polynomial hull} of $K$, and $K$ is \emph{polynomially convex} whenever $\widehat K=K$. Any compact convex set is polynomially convex. The $\C^N$ version of Runge's theorem is the Oka-Weil theorem, that states as follows (see \cite[Theorem 2.3.1]{Forstneric}).

\begin{thm}Let $D$ be a domain of $\C^N$. If $K\subset D$ is a compact $\mathcal{O}(D)$-convex set, then any function holomorphic in a neighborhood of $K$ can be approximated by functions in $\mathcal{O}(D)$ uniformly on $K$.
\end{thm}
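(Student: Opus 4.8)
The plan is to reproduce the classical Oka--Weil argument, reducing the statement to an approximation fact on a polydisc by means of an analytic polyhedron and the Oka extension trick.

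\emph{Step 1 (an analytic polyhedron between $K$ and $U$).} Let $f$ be holomorphic on an open neighbourhood $U$ of $K$. Since $D$ is pseudoconvex it is exhausted by $\mathcal O(D)$-convex compacta, so I fix one such compact set $L$ with $K\subset\operatorname{int}L$. For every $w$ in the compact set $L\setminus U$ we have $w\notin K=\widehat K_D$, hence there is $\varphi_w\in\mathcal O(D)$ with $\sup_K|\varphi_w|<1<|\varphi_w(w)|$; extracting a finite subcover of $L\setminus U$ produces $\varphi_1,\dots,\varphi_m\in\mathcal O(D)$ with $\sup_K|\varphi_j|<1$ for each $j$ while $\max_j|\varphi_j(w)|>1$ for every $w\in L\setminus U$. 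Then
\[
P:=\{z\in\operatorname{int}L:\ |\varphi_j(z)|<1,\ j=1,\dots,m\}
\]
is an analytic polyhedron with $K\subset\subset P$ and $\overline P\subset U$, so $f$ is holomorphic on a neighbourhood of $\overline P$.

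\emph{Step 2 (lift to a polydisc and approximate).} The map $\Phi=(\varphi_1,\dots,\varphi_m)\colon D\to\C^m$ sends $P$ into the unit polydisc, and $z\mapsto(z,\Phi(z))$ embeds $P$ as a relatively closed complex submanifold of $\operatorname{int}L\times\D^m$. By the Oka extension lemma — whose validity rests on $D$ being Stein — the function $f$ can be written as $F\circ\Phi$ on a slightly shrunken polyhedron still containing $K$, with $F$ holomorphic on a polydisc. On a polydisc a holomorphic function is the locally uniform limit of the partial sums of its Taylor expansion at the centre; expanding these, $F$ is a locally uniform limit of polynomials $Q_\nu$. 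Hence $Q_\nu\circ\Phi\in\mathcal O(D)$ and $Q_\nu\circ\Phi\to f$ uniformly on $K\subset\subset P$, which is the assertion.

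The step I expect to be the real obstacle is the Oka extension lemma invoked in Step 2: it is the only non-elementary ingredient, and it is precisely there that pseudoconvexity — equivalently, Steinness — of $D$ is consumed, through solvability of $\overline\partial$ on $D$ (or Cartan's Theorem~B). Alternatively one can bypass it: Hörmander's $L^2$-solution of $\overline\partial$ on the Stein manifold $D$ yields the approximation directly, and when $D=\C^N$ and $K$ is polynomially convex the lemma reduces to Cauchy's integral formula on a polydisc after the polyhedron of Step~1 has been produced with polynomials. See \cite[Chapter~2]{Forstneric}, \cite{Stout}, or \cite{Horm}.
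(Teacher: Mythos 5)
You should first note that the paper does not prove this statement at all: it is quoted as classical background (the Oka--Weil theorem) with a pointer to \cite[Theorem 2.3.1]{Forstneric}, so there is no in-paper argument to compare against. Your polyhedron-plus-Oka-extension route is indeed the classical proof, and Step 1 is essentially fine (you do not even need $L$ to be $\mathcal O(D)$-convex there: any compact neighbourhood of $K$ in $D$ makes $L\setminus U$ compact, and $\overline P\subset U$ then follows from the covering argument). One remark is cosmetic: the statement as printed says only ``domain'', while your proof — and the cited theorem — uses that $D$ is Stein/pseudoconvex; since every application in the paper has $D$ pseudoconvex this is a looseness of the statement rather than of your argument, but you should say explicitly that you are adding that hypothesis rather than deriving it.

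The substantive gap is in Step 2. The graph map $z\mapsto(z,\Phi(z))$ embeds $P$ as a closed subvariety of $\operatorname{int}L\times\D^m$, but this ambient set is the product of an arbitrary open set with a polydisc: in general it is neither a polydisc nor even a domain of holomorphy, so neither the Oka extension lemma in its polydisc form nor the ``Taylor partial sums converge locally uniformly'' step applies to an extension $F$ living there; as written, ``$F$ holomorphic on a polydisc'' is not what your construction produces. The standard repairs are: (i) if $D=\C^N$ and $K$ is polynomially convex, take $L$ to be a large closed polydisc and the $\varphi_j$ polynomials, so that $P$ sits as a closed subvariety of an honest polydisc in $\C^{N+m}$ and your argument goes through verbatim; (ii) for a general pseudoconvex $D$, either take $L=\widehat L_D$ from a Stein exhaustion and invoke Cartan's Theorem~B on the resulting Stein polyhedron, or — the device the paper itself invokes for Proposition~\ref{Kallin-hol} — properly embed $D$ into $\C^{2N+1}$, observe that the image of $K$ is then polynomially convex, and reduce to case (i); or (iii) bypass polyhedra entirely with H\"ormander's weighted $L^2$ solution of $\overline\partial$ \cite{Horm}. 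You flag the Oka extension step as the crux and offer (iii) as a fallback, which is the honest and correct instinct, but the main line of Step 2 does not close as written.
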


A union of polynomially convex sets is not polynomially convex in general. However, a classical result due to Kallin says that, under some conditions, this can hold true (see e.g. \cite[Theorem 1.6.19]{Stout}). In particular, it says that if $K$ and $L$ are compact subsets of $\mathbb C^N$ and there is a polynomial $p$ such that $\widehat{p(K)}\cap \widehat{p(L)}$ is empty, then $\widehat{K\cup L} = \widehat K \cup \widehat L$. Thus if $K$ and $L$ are additionally polynomially convex, then $K\cup L$ is polynomially convex as well. In the next sections we shall need the following analogous result for holomorphic hulls. It can be proven as \cite[Theorem 1.6.19]{Stout} or by embedding $D$ into $\C^{2N+1}$ (\cite[Theorem 5.3.9]{Horm}), using Cartan's theorem and directly applying \cite[Theorem 1.6.19]{Stout}.

\begin{prop}\label{Kallin-hol}
Let $K,L$ be compact subsets of a pseudoconvex domain $D$ in $\mathbb C^N$. Suppose that there is $f\in \mathcal O(D)$ such that $\widehat{f(K)}\cap \widehat{f(L)}$ is empty. Then $$\widehat{K\cup L}_D = \widehat K_D \cup \widehat L_D.$$	
\end{prop}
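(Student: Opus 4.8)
The inclusion $\widehat K_D\cup\widehat L_D\subseteq\widehat{K\cup L}_D$ is immediate from the definition of the hull, so the content is the reverse one. The plan is to reduce it to the polynomial Kallin lemma recalled above, applied in a Euclidean space of sufficiently large dimension. Since $D$ is pseudoconvex it is Stein, so by the embedding theorem in the form of \cite[Theorem 5.3.9]{Horm} there is a proper holomorphic embedding $\iota\colon D\to\C^{2N+1}$; I would set $V=\iota(D)$, a closed complex submanifold of $\C^{2N+1}$, and regard $\iota$ as a biholomorphism of $D$ onto $V$.

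The key point will be that polynomial hulls in $\C^{2N+1}$ of compact subsets of $V$ stay inside $V$ and coincide with the corresponding $\mathcal O(V)$-hulls: for every compact $E\subseteq V$,
\[
\widehat E=\widehat E_V\subseteq V,
\]
where $\widehat E$ denotes the polynomial hull in $\C^{2N+1}$. To see $\widehat E\subseteq V$, given $p\in\C^{2N+1}\setminus V$ I would use Cartan's Theorem A applied to the coherent ideal sheaf of $V$ to produce $g\in\mathcal O(\C^{2N+1})$ with $g|_V=0$ and $g(p)\neq0$, so that $\sup_E|g|=0<|g(p)|$ and hence $p\notin\widehat E$. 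For the equality of the two hulls, Cartan's Theorem B guarantees that every function in $\mathcal O(V)$ extends to $\mathcal O(\C^{2N+1})$, and since polynomials are locally uniformly dense in $\mathcal O(\C^{2N+1})$ one gets $\widehat E_V=\widehat E\cap V$, which equals $\widehat E$ by the first part. Transporting through the biholomorphism $\iota$ then gives, for every compact $E\subseteq D$,
\[
\iota\bigl(\widehat E_D\bigr)=\widehat{\iota(E)}_V=\widehat{\iota(E)}.
\]

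With this in hand I would transport the hypothesis. Put $K'=\iota(K)$ and $L'=\iota(L)$, and let $\tilde g\in\mathcal O(\C^{2N+1})$ extend $f\circ\iota^{-1}\in\mathcal O(V)$, so that $\tilde g(K')=f(K)$, $\tilde g(L')=f(L)$ and therefore $\widehat{\tilde g(K')}\cap\widehat{\tilde g(L')}=\emptyset$. These being disjoint compacts in $\C$, and the filled polynomial hull of a compact subset of $\C$ depending upper semicontinuously on the set for the Hausdorff metric, any polynomial $p$ sufficiently close to $\tilde g$ uniformly on $K'\cup L'$ still has $\widehat{p(K')}\cap\widehat{p(L')}=\emptyset$. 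The polynomial Kallin lemma in $\C^{2N+1}$, applied to $K'$, $L'$ and $p$, then yields $\widehat{K'\cup L'}=\widehat{K'}\cup\widehat{L'}$, and feeding $E=K\cup L$, $E=K$, $E=L$ into the displayed identity together with the injectivity of $\iota$ gives $\widehat{K\cup L}_D=\widehat K_D\cup\widehat L_D$. (Alternatively, one can copy the proof of \cite[Theorem 1.6.19]{Stout} almost verbatim, substituting $\mathcal O(D)$ for the polynomials throughout.)

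I expect the only real input to be Cartan's Theorems A and B, which are precisely what turn holomorphic hulls in $D$ into honest polynomial hulls after the embedding; the rest is bookkeeping, plus the routine remark on upper semicontinuity of one--variable polynomial hulls. The one minor point to keep an eye on is that all the hulls occurring in the argument must be compact — equivalently $\widehat E_D\Subset D$ for $E$ compact — and this is exactly where the pseudoconvexity of $D$ enters.
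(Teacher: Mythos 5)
Your argument is correct and is precisely the route the paper itself indicates (the paper gives no detailed proof, only the one-line recipe of embedding $D$ properly into $\C^{2N+1}$ via \cite[Theorem 5.3.9]{Horm}, using Cartan's Theorems A and B to identify $\mathcal O(D)$-hulls with polynomial hulls of the image, and then invoking \cite[Theorem 1.6.19]{Stout}); you have simply filled in the details, including the needed upper semicontinuity of one-variable polynomial hulls to replace the entire extension $\tilde g$ by a polynomial. You also correctly note the paper's alternative route of rerunning Stout's proof with $\mathcal O(D)$ in place of the polynomials.
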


This proposition and an easy induction immediately yields the next corollary.

\begin{coro}\label{cor:pol} Let $D\subset \mathbb C^N$ be a pseudoconvex domain. Let $K_0,\ldots, K_n$ be compact $\mathcal{O}(D)$-convex sets. Suppose that for each $j=0,\ldots,n$ there is $f\in \mathcal O(D)$ such that $\widehat{f(K_j)}_D\cap \widehat{f(\bigcup_{i<j} K_i)}_D$ is empty. Then $K_0\cup\ldots \cup K_n$ is $\mathcal{O}(D)$-convex.

\end{coro}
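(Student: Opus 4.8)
The plan is to prove Corollary \ref{cor:pol} by induction on $n$, with Proposition \ref{Kallin-hol} doing all the real work at each step. First I would fix $D$ pseudoconvex and compact $\mathcal O(D)$-convex sets $K_0,\ldots,K_n$ such that for each $j=0,\ldots,n$ there is some $f_j\in\mathcal O(D)$ with $\widehat{f_j(K_j)}_D\cap\widehat{f_j(\bigcup_{i<j}K_i)}_D=\emptyset$. The base case $n=0$ is just the hypothesis that $K_0$ is $\mathcal O(D)$-convex. For the inductive step, assume $L:=K_0\cup\ldots\cup K_{n-1}$ is $\mathcal O(D)$-convex, i.e. $\widehat{L}_D=L$. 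Set $K:=K_n$; by hypothesis $K$ is $\mathcal O(D)$-convex, so $\widehat{K}_D=K$. The separation hypothesis for $j=n$ says exactly that there is $f:=f_n\in\mathcal O(D)$ with $\widehat{f(K)}_D\cap\widehat{f(L)}_D=\emptyset$.

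Then I would apply Proposition \ref{Kallin-hol} directly to this pair $K,L$: it yields $\widehat{K\cup L}_D=\widehat{K}_D\cup\widehat{L}_D=K\cup L$, where the last equality uses the inductive hypothesis and the $\mathcal O(D)$-convexity of $K_n$. Since $K\cup L=K_0\cup\ldots\cup K_n$, this shows that $K_0\cup\ldots\cup K_n$ equals its own holomorphic hull with respect to $D$, i.e. it is $\mathcal O(D)$-convex, completing the induction.

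There is no substantial obstacle here; the only point deserving a word of care is that Proposition \ref{Kallin-hol} is applied to $L=\bigcup_{i<n}K_i$ as a single compact set, and the separation condition in the statement of the corollary is phrased precisely so that its $j=n$ instance matches the hypothesis of the proposition verbatim (with $\widehat{f(\bigcup_{i<n}K_i)}_D$ in the role of $\widehat{f(L)}_D$). One should also note that finite unions of compact sets are compact, so $L$ is indeed a legitimate compact subset of $D$ at every stage, and that the empty union (for $j=0$) has empty hull, so the $j=0$ condition is vacuous and consistent with the base case. Everything else is a routine unwinding of definitions.
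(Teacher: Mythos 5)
Your proof is correct and is exactly the argument the paper intends: the authors state that the corollary follows from Proposition \ref{Kallin-hol} ``and an easy induction,'' which is precisely your induction on $n$ applying the proposition to $K_n$ and $L=\bigcup_{i<n}K_i$ at each step. Nothing further is needed.
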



\section{Proofs of Theorems A and B and some complementary results}

\subsection{Wild holomorphic functions along finite length paths}\label{sec-thmA}

Given an open set $U\subset \C^N$, a map $\gamma :[0,1)\to U$ is said to be a path to $bU$ - or ending at $bU$ - if $\gamma$ is piecewise $\mathcal{C}^1$ and there exists $x\in bU$ such that $\lim_{r\to 1} \gamma(r)=x$. The length of a path $\gamma$ is defined as $\int_0^1|\gamma'(t)|dt$. The main goal of this section is to prove Theorem A. Let us first prove the first part, that we restate as follows.

\begin{thm}\label{theo-finite-path-pseudoconvex}
	Let $D$ be a pseudoconvex domain of $\mathbb C^N$. Let $\mathcal{V}$ denote the subset of $\mathcal{O}(D)$ consisting in those $f$ satisfying the property that given any finite length path $\gamma$ to $bD$ and any complex number $c$, there exists $r_n$ tending to $1$ such that $f\circ \gamma(r_n)\rightarrow c$ as $n\rightarrow \infty$. Then $\mathcal{V}$ is residual in $\mathcal{O}(D)$.
\end{thm}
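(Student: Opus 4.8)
The plan is to realize $\mathcal{V}$ as a countable intersection of open dense sets using a Baire category argument on $\mathcal{O}(D)$, which is a Fréchet space. First I would fix a countable dense set $\{c_k\}_{k\geq 1}$ in $\C$ and an exhaustion $(K_m)_{m\geq 1}$ of $D$ by $\mathcal{O}(D)$-convex compact sets with $K_m \subset \mathrm{int}\,K_{m+1}$. The key tool is a \emph{labyrinth}: by the results of Globevnik and of the authors in \cite{CharpKos}, there is a sequence $(\Gamma_n)_n$ of compact subsets of $D$ such that every finite length path to $bD$ intersects all but finitely many of the $\Gamma_n$, and such that $K_m \cup \Gamma_n$ is $\mathcal{O}(D)$-convex for each $n$ and each $m$ (arranging, via the layered structure of the labyrinth, that the relevant pieces of $\Gamma_n$ sit outside $K_m$ and that one can apply Corollary \ref{cor:pol} to get $\mathcal{O}(D)$-convexity of the union). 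Since each $\Gamma_n$ is compact inside $D$, we may also cover it by finitely many small balls and, after refining, assume each $\Gamma_n$ is itself a finite union of small closed balls on which any holomorphic function is nearly constant up to a controlled error.

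Next I would define, for integers $m, k, n, j$ with $n$ large enough that $\Gamma_n \cap K_m = \emptyset$, the set
\[
U_{m,k,j} = \Big\{ f\in \mathcal{O}(D) : \exists\, n\geq j \text{ such that } \sup_{\Gamma_n}\big| f - c_k \big| < \tfrac{1}{j} \Big\}.
\]
Each $U_{m,k,j}$ is open in $\mathcal{O}(D)$ because the condition involves only a supremum over a fixed compact set for a fixed witnessing index $n$, and this is an open condition that survives small locally uniform perturbations. Density is where the Oka--Weil theorem enters: given $g\in\mathcal{O}(D)$, $\varepsilon>0$ and a compact $L\subset D$, choose $m$ with $L\subset K_m$ and then $n\geq j$ with $\Gamma_n\cap K_m=\emptyset$; since $K_m\cup\Gamma_n$ is $\mathcal{O}(D)$-convex, the function equal to $g$ near $K_m$ and equal to the constant $c_k$ near $\Gamma_n$ is holomorphic on a neighborhood of $K_m\cup\Gamma_n$, hence is approximable uniformly on $K_m\cup\Gamma_n$ by some $f\in\mathcal{O}(D)$; this $f$ is within $\varepsilon$ of $g$ on $L$ and within $1/j$ of $c_k$ on $\Gamma_n$, so $f\in U_{m,k,j}$. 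Setting $\mathcal{V}' = \bigcap_{m,k,j} U_{m,k,j}$ gives a residual set.

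Finally I would check $\mathcal{V}'\subset\mathcal{V}$. Let $f\in\mathcal{V}'$, let $\gamma$ be a finite length path to $bD$, and let $c\in\C$. Pick $c_{k}$ close to $c$. For each $j$, membership in $\bigcap_m U_{m,k,j}$ produces arbitrarily large $n$ with $\sup_{\Gamma_n}|f-c_k|<1/j$; since $\gamma$ has finite length it meets all but finitely many $\Gamma_n$, so for each $j$ we can select such an $n=n(j)$ with $\Gamma_{n(j)}\cap\gamma([0,1))\neq\emptyset$ and with $n(j)\to\infty$, giving a parameter $r_j\to 1$ (here we use that $\gamma(r)\to bD$, so the $\Gamma_n$ being pushed toward $bD$ forces the intersection parameters to tend to $1$) with $|f\circ\gamma(r_j)-c_k|<1/j$. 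A diagonal choice of $c_k\to c$ then yields a sequence along which $f\circ\gamma\to c$, as required. I expect the main obstacle to be the bookkeeping around the labyrinth: one must be careful to extract the exact statement that $K_m\cup\Gamma_n$ is $\mathcal{O}(D)$-convex (invoking Proposition \ref{Kallin-hol} or Corollary \ref{cor:pol} with a suitable separating function), and to verify that ``all but finitely many $\Gamma_n$'' translates into ``intersection parameters $r_j\to 1$'', which relies on the labyrinth sets being eventually contained in arbitrarily thin shells near $bD$. Everything else is a standard Baire-category and Oka--Weil routine; dense lineability and spaceability, claimed in the full Theorem A, would be addressed separately afterward.
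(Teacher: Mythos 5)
Your proposal is correct and follows essentially the same route as the paper: the same labyrinth from \cite{CharpKos} (with the length parameter forcing every finite length path to meet all but finitely many $\Gamma_n$), the same countable intersection of sets of the form $\bigcup_n\{f:\sup_{\Gamma_n}|f-c_k|<1/j\}$, and the same Oka--Weil density argument exploiting the $\mathcal{O}(D)$-convexity of $\overline{D_n}\cup\Gamma_n$. The only cosmetic differences are the redundant index $m$ in your $U_{m,k,j}$ and the unnecessary refinement of $\Gamma_n$ into small balls, neither of which affects the argument.
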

	
The proof will be an easy application of the Baire Category Theorem and of the main result of \cite{CharpKos} that we state below under the form we need.

\begin{thm}\label{thm-CharpKos}
	Let $D$ be a pseudoconvex domain in $\C^N$ and let $(D_n)$ be an exhaustion of $D$ by strictly pseudoconvex domains that are $\mathcal O(D)$-convex. Let also $(M_n)$ be a sequence of positive numbers. Then there are compact sets $\Gamma_n\subset D_{n+1}\setminus\overline D_{n}$ such that $\overline D_n\cup\bigcup_{j=n}^m \Gamma_j$ is $\mathcal O(D)$-convex for every $m\geq n$, and any continuous path connecting $\partial D_n$ to $\partial D_{n+1}$ and omitting $\Gamma_n$ has length greater than $M_n$.
\end{thm}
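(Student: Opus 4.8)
The plan is to establish the two asserted properties of the walls $\Gamma_n$ separately, after fixing convenient level sets. Replacing $(D_n)$ if necessary, I would fix a smooth strictly plurisubharmonic exhaustion $\varphi$ of $D$ with $D_n=\{\varphi<c_n\}$, so that each $\{\varphi\le c\}$ is a strictly pseudoconvex, $\mathcal O(D)$-convex compact set. Working in one shell at a time, I would build $\Gamma_n$ as a finite union of \emph{walls}: for finitely many intermediate levels $c_n<\tau_1<\dots<\tau_L<c_{n+1}$, each wall sits in a hypersurface $\{\varphi=\tau_i\}$ (or, near a boundary point, in a real affine hyperplane in local convexifying coordinates), and equals such a cross-section of the shell with a small ``door'' removed; the doors are placed in a staggered, serpentine pattern from one level to the next. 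There are then two independent things to verify: the length lower bound, and the $\mathcal O(D)$-convexity of $\overline D_n\cup\bigcup_{j=n}^m\Gamma_j$.

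For the length estimate I would argue as follows. A continuous path $\gamma$ joining $\partial D_n$ to $\partial D_{n+1}$ remains in the closed shell $\{c_n\le\varphi\le c_{n+1}\}$, so by the intermediate value theorem applied to $\varphi\circ\gamma$ it meets every level $\{\varphi=\tau_i\}$. Since $\gamma$ omits $\Gamma_n$, at each level it must pass through the door. Because consecutive doors are offset by at least a fixed lateral amount $a>0$, consecutive crossing points lie at distance $\ge a$, whence the length of $\gamma$ is at least $(L-O(1))\,a$. Refining the set of levels (increasing $L$) then makes this exceed $M_n$; non-rectifiable paths have infinite length and are handled trivially.

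The convexity is where I would invoke Corollary \ref{cor:pol}. Each ``cross-section minus door'' is not convex, so I would first cut it into finitely many \emph{convex} plaquettes (the frame pieces around the door); a convex compact set is polynomially convex, hence $\mathcal O(D)$-convex. I would then list the building blocks as $K_0=\overline D_n$ followed by the plaquettes in order of increasing level, and at each step produce $f\in\mathcal O(D)$ with $\widehat{f(K_j)}_D\cap\widehat{f(\bigcup_{i<j}K_i)}_D=\emptyset$. Near a boundary point $x$ of $\{\varphi\le\tau\}$, strict pseudoconvexity --- in the form of the local strict convexity of Theorem \ref{thm-Died} --- supplies a holomorphic support/peak function whose real part is strictly larger on a small plaquette than on the adjacent interior; since $\operatorname{Re}f$ is pluriharmonic, its maximum over a polynomial hull is controlled by its maximum over the set, so such an $f$ makes the two image-hulls in $\mathbb C$ disjoint. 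Corollary \ref{cor:pol} then yields that $\overline D_n\cup\bigcup_{j=n}^m\Gamma_j$ is $\mathcal O(D)$-convex.

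The main obstacle is precisely the production of these separating functions \emph{globally}. A local support function $\ell_x$ separates a new wall from the nearby part of the core, but $\operatorname{Re}\ell_x$ is large on the far side of $D_n$, so $\ell_x$ fails to separate the wall from the distant interior or from earlier walls on the opposite side; conversely, the $\mathcal O(D)$-convexity of $\{\varphi\le\tau\}$ only gives separation \emph{in modulus}, which does not control polynomial hulls and so is useless for the Kallin-type criterion. Reconciling these --- by keeping each wall localized near one boundary point, choosing the levels and door placement so that the entire earlier union is mapped, under one globally defined holomorphic peak function, into a region whose hull provably misses the image of the new plaquette, and ordering the blocks so that this holds at every step --- is the technical heart of the argument. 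The genuine balancing act is that forcing length wants the walls dense and intricate, while Kallin-separability wants them sparse and cleanly ordered; making these two demands compatible is the crux.
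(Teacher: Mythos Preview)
This theorem is not proved in the paper at all: it is quoted from \cite{CharpKos} as an input to the proof of Theorem~\ref{theo-finite-path-pseudoconvex}. So there is no in-paper proof to compare against; what you have written is an attempted reconstruction of the argument of \cite{CharpKos}.

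Your overall architecture --- build $\Gamma_n$ layer by layer between intermediate level sets, force long detours by staggering the openings, and certify $\mathcal O(D)$-convexity via a Kallin-type inductive separation (Corollary~\ref{cor:pol}) --- is indeed the strategy used in the literature (Globevnik, Alarc\'on--Globevnik--L\'opez in the ball, and \cite{CharpKos} in general). The length argument you sketch is correct and routine.

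Where your proposal stops being a proof is exactly where you say it does: you identify the global separation problem and then do not solve it. Two concrete points. First, your building blocks are ``level-set cross-section minus a door, cut into convex plaquettes''. Such plaquettes are large, wrap most of the way around $\partial D_n$, and cannot be separated from $\overline D_n$ by a single peak-type function; this is the wrong granularity. The actual constructions use \emph{many small} pieces, each localized near one boundary point (tangent balls in the convex case, their pullbacks in the pseudoconvex case), precisely so that one support function per piece suffices. Second, your dichotomy ``local linear support functions fail on the far side / $\mathcal O(D)$-convexity gives only modulus separation, which is useless for Kallin'' overlooks the tool that resolves it: since each $D_n$ is strictly pseudoconvex and relatively compact in the Stein domain $D$, for every $x\in\partial D_n$ there is a \emph{global} holomorphic support function $h_x\in\mathcal O(D)$ with $\operatorname{Re}h_x<0$ on $\overline D_n\setminus\{x\}$ and $\operatorname{Re}h_x>0$ on a small outward neighbourhood of $x$ (equivalently, one may invoke Forn\ae ss's embedding of $\overline D_n$ into a strictly convex domain and pull back a real-linear functional). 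For a sufficiently small piece $\Gamma$ placed near $x$ in $D_{n+1}\setminus\overline D_n$, $h_x(\overline D_n)$ and $h_x(\Gamma)$ lie in disjoint half-planes of $\mathbb C$, so their polynomial hulls in $\mathbb C$ are disjoint, and Corollary~\ref{cor:pol} applies. The same $h_x$ also separates $\Gamma$ from all earlier pieces, since those were already absorbed into a set contained in $\{\operatorname{Re}h_x<0\}$ at their induction step. Once you switch to small localized pieces and use global support functions, the ``balancing act'' you describe disappears: density of the walls is achieved by taking many small pieces at many intermediate levels, and each piece is Kallin-separable on its own.
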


\begin{proof}[Proof of Theorem \ref{theo-finite-path-pseudoconvex}]Let $(K_n)$ be given by Theorem \ref{thm-CharpKos} with $M_n=1$ for any $n$, and let $(c_j)_j$ be a dense sequence in $\mathbb{C}$. We define
	\[
	\tilde{\mathcal{V}}:=\bigcap_{k,j\in \N} \bigcup_{n\in \N}\left\{f\in \mathcal{O}(D):\,\sup_{\Gamma_n}|f-c_j|<\frac{1}{k}\right\}.
	\]
	We claim that any $f\in \tilde{\mathcal{V}}$ belongs to $\mathcal{V}$. Indeed, for a given $c\in \mathbb{C}$, there exists a sequence $(n_k)_k$ such that $\sup_{\Gamma_{n_k}}|f-c|<\frac{1}{k}$. Let now $\gamma$ be any continuous path with finite length ending at $b D$. By the properties of the $\Gamma_n$'s, $\gamma([0,1))$ must intersect all but finitely many $\Gamma _{n_k}$. Hence $c$ belongs to the closure of $f\circ \gamma ([0,1))$.
	
	It remains to prove that $\tilde{\mathcal{V}}$ is a dense $G_{\delta}$-subset of $\mathcal{O}(D)$. By the Baire Category Theorem, it is enough to prove that for any $k,j$, the set $\bigcup_{n\in \N}\left\{f\in \mathcal{O}(D):\,\sup_{\Gamma_n}|f-c_j|<\frac{1}{k}\right\}$ is open and dense in $\mathcal{O}(D)$. That it is open is clear. To prove it is dense, we fix a compact subset $K\subset D$, $h\in \mathcal{O}(D)$ and $\epsilon>0$. Then we choose $n$ such that $K\subset D_{n}$ and use the $\mathcal{O}(D)$-convexity of $\overline D_{n} \cup \Gamma_n$ to get $f\in \mathcal{O}(D)$ such that $|f-h|<\epsilon$ on $\overline D_{n}$ and $|f-c_j|<1/k$ on $\Gamma_n$
	
\end{proof}

Let us now complete the proof of Theorem A.

\begin{prop}\label{lineability-V}With the notations of Theorem \ref{theo-finite-path-pseudoconvex}, $\mathcal{V}$ is dense lineable and spaceable in $\mathcal{O}(D)$.
\end{prop}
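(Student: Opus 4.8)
The plan is to obtain both conclusions from a single type of construction, which is the infinite‑dimensional counterpart of the proof of Theorem~\ref{theo-finite-path-pseudoconvex}. Fix an exhaustion $(D_n)$ of $D$ and the labyrinth $(\Gamma_n)$ as in Theorem~\ref{thm-CharpKos} (with $M_n=1$), a dense sequence $(c_i)_i$ in $\mathbb C$, a partition $\mathbb N=\bigcup_j S_j$ into infinitely many infinite sets with the precaution $S_j\subset\{j,j+1,\dots\}$, and for each $j$ a map $\sigma_j\colon S_j\to\mathbb N$ attaining every value infinitely often. Using the stagewise Oka--Weil/Mittag-Leffler scheme on the $\mathcal O(D)$‑convex sets $\overline D_m\cup\bigcup_{k=m}^{M}\Gamma_k$ furnished by Theorem~\ref{thm-CharpKos}, I will build functions $v_j\in\mathcal O(D)$ that are "loud" along their own layers — $\sup_{\Gamma_n}|v_j-c_{\sigma_j(n)}|$ as small as prescribed for $n\in S_j$ large — and "silent" along the others — $\sup_{\Gamma_n}|v_j|$ as small as prescribed for $n\notin S_j$ large. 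All conditions imposed below bear on the pairwise disjoint compacta $\overline D_1,\Gamma_1,\Gamma_2,\dots$ (recall $\Gamma_n\cap\overline D_m=\emptyset$ for $n\ge m$), on which a holomorphic model is trivially available, so such $v_j$ exist. Two facts already used for Theorem~\ref{theo-finite-path-pseudoconvex} will be reused: $\mathcal V$ is invariant under multiplication by nonzero scalars, and any finite length path $\gamma$ to $bD$ meets $\Gamma_n$ for all but finitely many $n$, hence meets infinitely many $\Gamma_n$ with $n\in S_j$, for every $j$.

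\emph{Dense lineability.} Require in addition $\sup_{\overline D_j}|v_j-q_j|<2^{-j}$, where $(q_j)$ is a fixed dense sequence in $\mathcal O(D)$, and $\sup_{\Gamma_n}|v_j|<2^{-n-j}$ for $n\notin S_j$. Then $E:=\operatorname{span}\{v_j:j\in\mathbb N\}$ is dense, and I claim $E\setminus\{0\}\subset\mathcal V$. If $g=\sum_{j\le K}a_jv_j\neq0$, let $j_0=\max\{j:a_j\neq0\}$; by scalar invariance we may assume $a_{j_0}=1$. Given a finite length path $\gamma$ to $bD$ and $c\in\mathbb C$, choose $c_i$ near $c$ and then a large $n\in S_{j_0}$ with $\sigma_{j_0}(n)=i$; $\gamma$ meets $\Gamma_n$ at some $\gamma(t_n)$ with $t_n\to1$, and there $|g-c_i|\le|v_{j_0}-c_i|+\sum_{j<j_0}|a_j|2^{-n-j}$, which can be made arbitrarily small. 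Letting $c_i\to c$ and $n\to\infty$ produces points of $\gamma$ along which $g\to c$; hence the cluster set of $g$ along $\gamma$ equals $\mathbb C$ and $g\in\mathcal V$.

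\emph{Spaceability.} Now replace the approximation of $(q_j)$ by a localized linearly independent pattern on the fixed compact $\overline D_1$. Let $A_1\subset C(\overline D_1)$ be the closure of $\mathcal O(D)|_{\overline D_1}$ (an infinite dimensional Banach space, since $\overline D_1$ has nonempty interior), fix a normalized basic sequence $(\eta_j)$ of $A_1$ with basis constant $\le 2$ and choose $\tilde\eta_j\in\mathcal O(D)$ with $\|\tilde\eta_j|_{\overline D_1}-\eta_j\|$ small; then require $\|v_j|_{\overline D_1}-2^{-j}\tilde\eta_j|_{\overline D_1}\|\le 8^{-j}$ (small enough for the small perturbation principle for basic sequences) together with $\sup_{\Gamma_n}|v_j|<4^{-j}2^{-n}$ for $n\notin S_j$. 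Then $(v_j|_{\overline D_1})$ is a basic sequence of $A_1$ whose coordinate functionals $v_j^*$ satisfy $\|v_j^*\|\lesssim2^{j}$. Set $F:=\overline{\operatorname{span}}\{v_j:j\in\mathbb N\}$; the $v_j$ being linearly independent, $F$ is a closed infinite dimensional subspace of $\mathcal O(D)$. Let $g\in F\setminus\{0\}$; by the identity theorem $g|_{\overline D_1}\neq0$, so $g|_{\overline D_1}=\sum_ja_jv_j|_{\overline D_1}$ in $A_1$ with $a_j=v_j^*(g|_{\overline D_1})$ and $|a_j|\lesssim2^{j}\sup_{\overline D_1}|g|$; writing $g$ as the $\mathcal O(D)$‑limit of finite combinations $g_k=\sum_ia_i^{(k)}v_i$ gives $a_i^{(k)}=v_i^*(g_k|_{\overline D_1})\to a_i$ with $|a_i^{(k)}|\lesssim2^{i}\sup_{\overline D_1}|g|$ uniformly in $k$. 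Hence, for $n\in S_j$ (the term $|a_j^{(k)}-a_j|\sup_{\Gamma_n}|v_j|$ disappearing as $k\to\infty$),
\begin{align*}
\sup_{\Gamma_n}|g-a_jv_j| &=\lim_k\sup_{\Gamma_n}|g_k-a_jv_j| \\
&\le\limsup_k\Big(|a_j^{(k)}-a_j|\sup_{\Gamma_n}|v_j|+\sum_{i\neq j}|a_i^{(k)}|\sup_{\Gamma_n}|v_i|\Big)\ \lesssim\ 2^{-n}\sup_{\overline D_1}|g|\ \longrightarrow\ 0 .
\end{align*}
Picking $j$ with $a_j\neq0$ and repeating the cluster‑set argument of the previous paragraph — on the layers $\Gamma_n$, $n\in S_j$, $g$ is within $o(1)$ of $a_jv_j$ and $v_j$ is within $o(1)$ of a prescribed $c_i$, while any finite length path hits cofinitely many $\Gamma_n$ — shows that the cluster set of $g$ along every finite length path is $\mathbb C$; thus $F\setminus\{0\}\subset\mathcal V$.

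The delicate point is precisely the last estimate for spaceability: the $v_j$ must be loud on the layers indexed by $S_j$ yet silent on all other layers at a rate that survives multiplication by the coordinates of an \emph{arbitrary} element of the closed span $F$, and those coordinates grow a priori (like $2^j$ above). The way around it is to decouple the two scales — first pin down the coordinate functionals by prescribing the biorthogonal pattern $2^{-j}\tilde\eta_j$ on the fixed compact $\overline D_1$, which is disjoint from every layer, and only afterwards impose the much faster off‑layer decay $\sup_{\Gamma_n}|v_j|<4^{-j}2^{-n}$. Beyond this, the proof only requires the routine verification that all imposed conditions are mutually compatible with the stagewise Oka--Weil construction (they live on pairwise disjoint compacta) and that $\mathcal O(D)|_{\overline D_1}$ is dense in its uniform closure $A_1$ (true by definition), together with the elementary facts about $\mathcal V$ recalled in the first paragraph.
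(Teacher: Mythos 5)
Your argument is correct, but it is organized quite differently from the paper's on both counts, so a comparison is worthwhile. For dense lineability the paper constructs nothing new: it picks each $f_{l+1}$ \emph{generically} in the residual set $\tilde{\mathcal V}((n_i^{l}))\cap U_{l+1}$ already furnished by the proof of Theorem~\ref{theo-finite-path-pseudoconvex}, and obtains the ``loud/silent'' dichotomy a posteriori by extracting nested subsequences of layers along which all previously chosen functions tend to $0$. You instead fix in advance a partition $\mathbb N=\bigcup_j S_j$ and build each $v_j$ by a stagewise Oka--Weil scheme so that it is loud exactly on the layers indexed by $S_j$; this is valid, but it transfers to the lineability part the telescoping error bookkeeping (each stage perturbs what was achieved at earlier stages) that the paper only has to carry out once, in the form of conditions (i)--(iv) of its spaceability proof --- you assert this step rather than perform it, which is acceptable but is the one place where ``routine'' hides real work. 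A small wrinkle: the silent condition $\sup_{\Gamma_n}|v_j|<2^{-n-j}$ should be imposed only for $n\ge j$, since for $n<j$ the layer $\Gamma_n$ lies inside $\overline{D_j}$ where $v_j$ is required to approximate $q_j$; your argument only ever invokes it for $n\in S_{j_0}$ with $j<j_0$, so nothing breaks. For spaceability the paper follows Menet's Fr\'echet-space machinery: a basic sequence $(e_n)$ adapted to the seminorms $\sup_{\overline{D_n}}|\cdot|$, the perturbations $f_j=\sum_{i\ge j}f_{i,j}+e_j$, the equivalence of $(f_j)$ with $(e_j)$, and the resulting \emph{uniform} bound $|a_n|\le K$ on the coefficients of any element of the closed span. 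You replace this by a weighted basic sequence $2^{-j}\eta_j$ in the single Banach space given by the uniform closure of $\mathcal O(D)|_{\overline{D_1}}$ in $C(\overline{D_1})$, accept coordinate functionals of norm $O(2^j)$, and compensate with the faster off-layer decay $4^{-j}2^{-n}$; the passage $a_i^{(k)}=v_i^*(g_k|_{\overline{D_1}})\to a_i$ correctly transfers the coefficient control from $\overline{D_1}$ (disjoint from every layer) to the layers themselves. This buys independence from Menet's lemmas on basic sequences in Fr\'echet spaces at the price of the two-scale decoupling trick, whereas the paper's version yields the cleaner uniform coefficient bound and the extra information that the $f_j$ form a basis of the subspace constructed; both establish the Proposition.
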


\begin{proof}(1) We first prove the dense lineability of $\mathcal{V}$. Let us fix a countable neighbourhood basis $(U_l)$ of $\mathcal{O}(D)$. With the notations used in the proof of Theorem \ref{theo-finite-path-pseudoconvex}, we easily derive from the latter that, for any increasing sequence $(n_i)$ of integers, the set
	\[
	\tilde{\mathcal{V}}((n_i)):=\bigcap_{k,j\in \N} \bigcup_{i\in \N}\left\{f\in \mathcal{O}(D);\,\sup_{\Gamma_{n_i}}|f-c_j|<\frac{1}{k}\right\}.
	\]
is residual in $\mathcal{O}(D)$. Let us build by induction sequences $(n_i^l)$ in $\mathbb{N}$ and $(f_l)$ in $\tilde{\mathcal{V}}$ as follows: Pick $f_0 \in \tilde{\mathcal{V}}\cap U_0$ and consider an increasing sequence $(n_i^0)$ such that $\sup_{\Gamma_{n_i^0}}|f_0|\to 0$ as $i\to \infty$. Now assume that $f_l$ has been built in $\tilde{\mathcal{V}}((n_i^{l-1}))\cap U_l$ (with the notation $(n_i^{-1}):=(n_i)$). There is an increasing subsequence $(n_i^l)$ of $(n_i^{l-1})$ for which $\sup_{\Gamma_{n_i^l}}|f_l|\to 0$ as $i\to \infty$. Let us pick any $f_{l+1}\in \tilde{\mathcal{V}}((n_i^{l}))\cap U_{l+1}$. Thus the set $E:=\text{span}(f_l:\,l\geq 0)$ is a dense vector subspace of $\mathcal{O}(D)$.

To finish, let us check that any $f:=\sum _{l=0}^Ma_lf_l\in E\setminus\{0\}$ belong to $\tilde{\mathcal{V}}$. We can assume that $a_M\neq 0$. Fix $k,j\in \N$. By definition, there exist infinitely many $n_i^{M}$ such that
\[
\sup_{\Gamma_{n_i^{M}}}|f_M-\frac{c_j}{a_M}|<\frac{1}{2k}
\]
Moreover, by construction, $\sup_{\Gamma_{n_i^{M}}}|f_l|\to 0$ as $i\to \infty$, for any $0\leq l \leq M-1$. Thus one can choose $i\in \N$ such that $\sup_{\Gamma_{n_i^{M}}}|f-c_j|<\frac{1}{k}$, which proves that $f\in \tilde{\mathcal{V}}((n_i^M))$. Since we clearly have $\tilde{\mathcal{V}}((n_i^M))\subset \tilde{\mathcal{V}}$, the proof is complete.

\medskip{}

\noindent(2) We turn to the proof of the spaceability of $\mathcal{V}$. We denote by $\prec$ the lexicographical order on $\N \times \N$. Let $(D_n)$ and $(\Gamma_n)$ be given as in Theorem \ref{thm-CharpKos}. Up to re-ordering we may assume that the $\Gamma_n$'s are ordered in a double sequence $\Gamma_{k,n}$, $k\geq n\geq 1$, such that $\Gamma_{k,n}\subset D_{j+1}\setminus \overline{D_j}$ where $j$ is the rank in which the pair $(k,n)$ appears in the sequence $\N \times \N$ ordered increasingly with respect to $\prec$ (for instance, if $(k,n)=(4,2)$ then its rank $j$ is $8$). We also fix a basic sequence $(e_n)$ in $\mathcal{O}(D)$, generating an infinite dimensional subspace, such that for every $n\geq 1$, $\sup_{\overline{D_1}} |e_n| =1$ and the sequence $(e_k)_{k\geq n}$ is basic in the normed vector space $(\mathcal{O}(D),\sup_{\overline{D_n}}|\cdot|)$ with basic constant less than $2$. For the existence of such a basic sequence, see \cite[Lemma 1.7]{Menet}. Let $(c_k)$ be a dense sequence in $\C$ and let us fix a sequence $(\epsilon_n)$ of positive real numbers such that $\epsilon:=\sum \epsilon _n<\infty$.

As in the proof of \cite[Theorem 1.1]{Menet} and using the $\mathcal{O}(D)$-convexity of $\Gamma_n \cup \overline{D_n}$, one can build by induction two families $(f_{i,j})_{i\geq j\geq 1} \subset \mathcal{O}(D)$ and $(n_{i,j})_{i\geq j\geq 1} \subset \N$ satisfying the following conditions:
\begin{enumerate}[(i)]
	\item $\sup_{\overline{D_j}}|f_{j,j}|<\frac{\epsilon_j}{2^{j+1}}$
	;
	\item $\sup_{\Gamma_{k,j}}|f_{i,j}-(c_k-\sum_{l=j}^{i-1}f_{l,j})|<\frac{\epsilon_j}{2^{i+1}}$ if $\Gamma_{k,j} \subset D_{i+1}\setminus \overline{D_i}$;
	\item $\sup_{\overline{D_i}}|f_{i,j}|<\frac{\epsilon_j}{2^{i+1}}$;
	\item $\sup_{\Gamma_{k,m}}|f_{i,j}+\sum_{l=j}^{i-1}f_{l,j}|<\frac{\epsilon_j}{2^{i+1}}$ if $m\neq j$ and $\Gamma_{k,m}\subset D_{i+1}\setminus \overline{D_i}$.
\end{enumerate}

By (iii) $f_j:=\sum_{i\geq j}f_{i,j}+e_j$ is well-defined in $\mathcal{O}(D)$ and, according to \cite[Lemma 1.9]{Menet}, (iii) also implies that $(f_j)$ is a basic sequence in $\mathcal{O}(D)$, equivalent to $(e_j)$. Thus the set $E:=\overline{\text{span}}(f_j;\,j\geq 1)$ is a closed infinite dimensional subspace of $\mathcal{O}(D)$. The remaining of the proof consists in showing that any $f=\sum_ {n\geq 1}a_nf_n \in E \setminus \{0\}$ belongs to $\mathcal{V}$. Denoting by $N$ the smallest integer among those $n$ for which $a_n\neq 0$, we shall assume, without loss of generality, that $a_N=1$. Moreover, proceeding as in the proof of \cite[Theorem 1.10]{Menet}, one can see that there exists a constant $K$ such that $|a_n|\leq K$ for any $n\geq 1$.

By the proof of Theorem \ref{theo-finite-path-pseudoconvex}, we need only prove that $\sup_{\Gamma_{k,N}}|f-c_k|\rightarrow 0$ as $k\rightarrow \infty$. For $k\geq N$ let us denote by $j(k)$ the unique integer such that $\Gamma_{k,N}\subset D_{j(k)+1}\setminus \overline{D_{j(k)}}$. Note that $j(k)>k$. Considering the estimate $|f(z)-c_k| \leq |f_N(z)-c_k|+K\sum_{n\neq N}|f_n(z)|$, we use first (ii) and (iii) to get, for any $z\in \Gamma_{k,N}$,
\[
|f_N(z)-c_k|\leq |f_N(z)-\sum_{i=N}^{j(k)}f_{i,N}(z)|+|\sum _{i=N}^{j(k)}f_{i,N}(z)-c_k| < \frac{\epsilon_N}{2^{j(k)}},
\]
and secondly (iii) and (iv) to obtain
\[
\sum_{n\neq N}|f_n(z)|\leq \sum_{n\neq N}|\sum_{i=n}^{j(k)}f_{i,n}(z)|+\sum_{n\neq N}|f_n(z)-\sum_{i=n}^{j(k)}f_{i,n}(z)| < \frac{\epsilon}{2^{j(k)}}.
\]
This concludes the proof.
\end{proof}


\subsection{Almost everywhere wild holomorphic functions}\label{sec-thmB}

From now on, we fix a strictly pseudoconvex domain $D$ in $\C^N$, $\rho$ a defining function of $D$, and we denote by $m=m_{bD}$ the normalized measure on $bD$. For simplicity, we also denote by $m$ the Lebesgue measure in $\mathbb{R}^n$. In what follows we shall say that a smooth domain (\textit{i.e.} a domain with $\mathcal{C}^2$ boundary) $D'$ is \emph{close} to $D$ if it possesses a defining function $\rho'$ that is close to $\rho$ in $\mathcal C^2$ topology. Note that if $D'$ is sufficiently close to $D$, then it is also strictly pseudoconvex.

Let us consider a $\mathcal{C}^1$ map
$\gamma:bD\times[0,1]\to \overline D$ such that $\gamma(x,r)\in D$, $r\in [0,1)$, and  $r\mapsto\gamma(x,r)$ hits $bD$ transversally at $x$ for any $x\in bD$, as $r\to 1$.
For $r\in [0,1)$, we denote by $bD_r:=\{\gamma(x,r): x\in bD\}$. We assume additionally ($\dag$):

\smallskip{}

($\dag$)  {\it For any $r$ close enough to $1$, $bD_r$ bounds a smooth domain $D_r$ that is close to $D$.}

\smallskip{}

\noindent{}In particular, this assumption implies that for any compact set $K$ in $D$ and any $r$ close enough to $1$, $D_r$ contains $K$.

Note that given a strictly pseudoconvex domain $D$, there always exists a $\mathcal C^1$ map as above satisfying ($\dag$). Indeed, it is the most natural one: For $x\in bD$, let $\nu_x$ denote the inward unit normal vector and set $\gamma(x,r):= x + (1-r)\nu_x$ for $r\in[0,1]$. It is clear that $\gamma$ is $\mathcal C^1$ on $bD\times [0,1]$. Moreover, for $r$ close enough to $1$, each point of the set $bD_r$ is at distance $r$ from $bD$. Now, we can use the well-known fact that the signed distance function to the boundary of a smooth domain is $\mathcal{C}^2$ in order to see that $D_r$ is a smooth domain, which is close in $\mathcal C^2$-topology to $D$ (see \cite[Chapter 3]{Krantz}).

In fact, a classical but a bit more tricky argument shows that there even exist maps $\gamma$ which are $\mathcal{C}^2$ on $bD\times [0,1]$ and hit transversally $bD$ as $r\to 1$. The following proposition says that, under this stronger assumption, ($\dag$) is automatically satisfied.
\begin{prop}\label{lemgammapseudo} Keeping the above settings, let us assume additionally that the map $\gamma$ is $\mathcal C^2$ on $bD \times [0,1]$. Let $K$ be any compact subset of $D$. There exists $r_0 \in (0,1)$ such that $D_r$ is the boundary of a strictly pseudoconvex domain in $D$ that contains $K$ whenever $r\in (r_0,1].$
\end{prop}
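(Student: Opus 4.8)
The plan is to work with the family of maps $\Phi_r : bD \to \mathbb{C}^N$, $\Phi_r(x) := \gamma(x,r)$, and to show that for $r$ close to $1$ these are $\mathcal{C}^2$ embeddings onto the boundary of a strictly pseudoconvex domain. First I would observe that $\Phi_1 = \mathrm{id}_{bD}$ is an embedding and that, since $r \mapsto \gamma(x,r)$ hits $bD$ transversally at $x$, the differential $d\Phi_r$ is of maximal rank along $bD$ for $r=1$; by the $\mathcal{C}^2$ regularity of $\gamma$ on the compact set $bD \times [0,1]$, the maps $\Phi_r$ stay $\mathcal{C}^1$-close to $\Phi_1$, hence remain immersions and, being also injective for $r$ near $1$ (an immersion $\mathcal{C}^1$-close to an embedding of a compact manifold is an embedding), define a $\mathcal{C}^2$ hypersurface $bD_r$.

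Next I would produce the defining function. The cleanest way is to use a tubular-neighbourhood / signed-distance argument: since $bD$ is $\mathcal{C}^2$, the signed distance function $\delta$ to $bD$ is $\mathcal{C}^2$ in a neighbourhood $V$ of $bD$, and $\rho_0 := \delta$ serves as a $\mathcal{C}^2$ defining function of $D$ near $bD$. The transversality hypothesis means exactly that the normal component of $\partial_r \gamma(x,r)|_{r=1}$ is nonzero, so by compactness there is $r_0$ and a constant $c>0$ with this component bounded below by $c$ for $r \in (r_0,1]$; this forces $bD_r \subset V$ for $r$ near $1$ and, after reparametrising, lets one write $bD_r$ as a graph over $bD$ in the normal direction with a $\mathcal{C}^2$ (in $x$) profile function that depends on $r$ and tends to $0$ in $\mathcal{C}^2$ as $r \to 1$. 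Concretely, $D_r = \{ z \in V : \delta(z) < g_r(\pi(z))\}$ where $\pi$ is the nearest-point projection onto $bD$ and $g_r \to 0$ in $\mathcal{C}^2(bD)$; thus $\rho_r(z) := \delta(z) - g_r(\pi(z))$ is a $\mathcal{C}^2$ defining function of $D_r$ near $bD_r$, and $\rho_r \to \rho_0$ in $\mathcal{C}^2$ on $V$. In other words $D_r$ is \emph{close} to $D$ in the sense defined just above Proposition \ref{lemgammapseudo}; since strict pseudoconvexity is an open condition in the $\mathcal{C}^2$ topology of defining functions (the Levi form varies continuously), $D_r$ is strictly pseudoconvex for $r$ close enough to $1$.

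Finally, to globalise from "near $bD_r$" to an honest domain containing $K$: one may glue $\rho_r$ with the original exhaustion function of $D$. Using a strictly plurisubharmonic exhaustion $u$ of $D$, for $r$ close to $1$ the sublevel set $\{u < c_r\}$ contains $K$ and its closure is contained in the region where $bD_r$ has already been shown to bound a strictly pseudoconvex domain; a standard regularised-maximum construction ($\max_\varepsilon$ of $u - c_r$ and a suitable constant times $\rho_r$) yields a strictly plurisubharmonic defining function for a strictly pseudoconvex domain squeezed between $\{u<c_r\} \supset K$ and $D_r$, which for the natural geometric $\gamma$ is just $D_r$ itself. The assumption ($\dag$) then holds. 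The main obstacle I expect is the bookkeeping in the second paragraph: carefully justifying that $bD_r$ is a normal graph over $bD$ with $\mathcal{C}^2$ profile tending to zero, which requires using the $\mathcal{C}^2$-regularity of $\gamma$ together with the $\mathcal{C}^2$-regularity of the distance function and an inverse/implicit function theorem argument uniform in $r$ — everything else is either soft (openness of strict pseudoconvexity) or already isolated in the remark preceding the proposition (the signed-distance function of a smooth domain is $\mathcal{C}^2$).
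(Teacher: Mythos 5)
Your overall strategy is the same as the paper's: represent $bD_r$ as a graph whose graphing data converges in $\mathcal C^2$ to that of $bD$, and then invoke the openness of strict pseudoconvexity with respect to the $\mathcal C^2$ topology on defining functions. The one place where your write-up does not quite work as stated is the normal-graph step. Since $bD$ is only assumed $\mathcal C^2$, the signed distance $\delta$ is $\mathcal C^2$ but its gradient, and hence the nearest-point projection $\pi=\mathrm{id}-\delta\nabla\delta$, is only $\mathcal C^1$. Your profile function $g_r=\delta\circ\gamma(\cdot,r)\circ(\pi\circ\gamma(\cdot,r))^{-1}$ and your defining function $\rho_r=\delta-g_r\circ\pi$ therefore come out only $\mathcal C^1$, so you cannot assert $g_r\to 0$ in $\mathcal C^2(bD)$ nor compute the Levi form of $\rho_r$ --- the composition with $\pi$ loses exactly the derivative you need. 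The paper avoids this by working with \emph{local} graphs over linear coordinate hyperplanes: by the implicit function theorem $bD=\{(t,f(t))\}$ locally with $f\in\mathcal C^2$, the map $t\mapsto$ (first $2N-1$ coordinates of $\gamma((t,f(t)),r)$) is a $\mathcal C^2$ diffeomorphism $\mathcal C^1$-close to the identity, and reparametrising by its ($\mathcal C^2$) inverse yields $bD_r=\{(t,g_r(t))\}$ with $g_r\to f$ in $\mathcal C^2$, because the projection used is linear. Your argument is repaired by making the same substitution. Finally, your third paragraph (regularised maximum with an exhaustion function) is unnecessary: once $bD_r$ is a graph uniformly close to $bD$, it already bounds a domain $D_r$, and $D_r\supset K$ simply because $K$ is at positive distance from $bD$.
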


\begin{proof}

Fix $x\in bD$.
It follows from the implicit function theorem that there exist a neighborhood $U$ of $x$, an open set $U'$ in $\mathbb R^{2n-1}$ and a $\mathcal C^2$ function $f:U \to \mathbb R$ such that
\[
U\cap bD=\{(t, f(t)):\ t\in U'\}.
\]
Now, using that $\gamma$ is $\mathcal{C}^2$, we shall infer that for any $r$ close enough to $1$, there are $\mathcal C^2$ smooth functions $g_r$ such that $\gamma((t, f(t)),r) = (t, g_r(t))$, $t\in U'$, and $g_r$ is close to $f$ in $\mathcal{C}^2$ topology. Thus $bD_r$ is locally the graph of a smooth function, from which we easily obtain, at first, that $bD_r$ bounds a domain, denote it by $D_r$, that contains $K$ for $r$ sufficiently close to 1. Second, we deduce that $D_r$ is strictly pseudoconvex, since the Levi form of some defining function for $D_r$, restricted to proper complex tangent spaces, is close to the (positive and definite) Levi form of a defining function of $D$.
\end{proof}

Let us now turn to the proof of Theorem B. The proofs of the dense lineability and the spaceability are omitted as both may be obtained exactly as Proposition \ref{lineability-V}, together with the proof of the following. Here and below, the map $\gamma$ is defined as above and assumed to be $\mathcal{C}^1$ and satisfy ($\dag$).

\begin{thm}\label{thmBrestated}
	Let $(r_j)$ be an increasing sequence, $0<r_j<1$, $\lim_{j\to \infty} r_j=1$. Then the set $\mathcal{W}(r_j)$ of holomorphic functions $f$ on $D$ such that, for any measurable function $h:bD\to \mathbb C$, there is a subsequence $(r_{j_k}) $ of $(r_j)$ with $$\lim_{k\to \infty} f(\gamma(x,r_{j_k})) = h(x)\quad  \text{for a.e.}\ x\in bD$$ is residual in $\mathcal O(D).$
\end{thm}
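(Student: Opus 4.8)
The plan is to realize $\mathcal{W}(r_j)$ as a countable intersection of dense open sets and invoke the Baire Category Theorem, exactly in the spirit of the proof of Theorem \ref{theo-finite-path-pseudoconvex}. First I would reduce the approximation of an arbitrary measurable $h:bD\to\mathbb C$ to the approximation of members of a fixed countable dense family. Concretely, fix a countable dense subset $\{h_s\}_{s\in\mathbb N}$ of, say, the continuous functions on $bD$ (dense in measure among all measurable functions, after using Lusin's theorem and the fact that $m(bD)<\infty$). For each $s,k\in\mathbb N$ one wants, along some $r_j$, that $\gamma(\cdot,r_j)$ pulls $f$ back to within $1/k$ of $h_s$ off a set of measure $<1/k$; a standard diagonal/tail argument then upgrades "close in measure along a subsequence, for every $h_s$" to "converges a.e. along a subsequence, for every measurable $h$" — this is the routine measure-theoretic packaging also used by Bayart. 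So the real content is the genericity of the approximation sets.

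The key analytic step is the density of
\[
A_{s,k}:=\bigcup_{j\in\mathbb N}\Bigl\{f\in\mathcal O(D):\ m\bigl(\{x\in bD:\ |f(\gamma(x,r_j))-h_s(x)|\geq \tfrac1k\}\bigr)<\tfrac1k\Bigr\}
\]
in $\mathcal O(D)$; openness is immediate since $x\mapsto f(\gamma(x,r_j))$ depends continuously (locally uniformly in $f$) on $f$ and $bD_{r_j}$ is a fixed compact subset of $D$, so the measure of the bad set is upper semicontinuous in $f$. To prove density, fix $g\in\mathcal O(D)$, a compact $K\subset D$, and $\varepsilon>0$. Using ($\dag$), choose $j$ large enough that $D_{r_j}$ is a strictly pseudoconvex (hence $\mathcal O(D)$-convex, after shrinking if necessary) domain containing $K$, and such that $bD_{r_j}$ is $\mathcal C^2$-close to $bD$. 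The goal is to build $f\in\mathcal O(D)$ with $|f-g|<\varepsilon$ on $\overline{D_{r_j}}$ while $f$ approximates $h_s\circ\gamma(\cdot,r_j)^{-1}$ in measure on the hypersurface $bD_{r_j}$. For this I would invoke Theorem \ref{thm-Died}: cover $bD$ by finitely many balls $B_{x_1},\dots,B_{x_p}$ and charts $k_{x_i}$ straightening pieces of $bD$ (and, for $j$ close to $1$, also the nearby hypersurface $bD_{r_j}$) to strictly convex boundary pieces in $\B_N$. On a strictly convex piece one has good one-sided Mergelyan-type approximation for compacts of the form (closed domain) $\cup$ (patch of the boundary), which lets one prescribe boundary values up to small measure; patching the finitely many pieces together requires Corollary \ref{cor:pol} (a Kallin-type lemma) to keep the relevant compact sets $\mathcal O(D)$-convex, so that Oka–Weil then transfers the local construction to a global $f\in\mathcal O(D)$. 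Since $\overline{D_{r_j}}$ is $\mathcal O(D)$-convex and disjoint in the right sense from the boundary patches where we prescribe $h_s$, the same Oka–Weil/Kallin machinery gives simultaneously $|f-g|<\varepsilon$ on $\overline{D_{r_j}}$.

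Granting density of each $A_{s,k}$, Baire gives that $\widetilde{\mathcal W}:=\bigcap_{s,k}A_{s,k}$ is a dense $G_\delta$. It remains to check $\widetilde{\mathcal W}\subset\mathcal W(r_j)$: given $f\in\widetilde{\mathcal W}$ and an arbitrary measurable $h$, pick $h_{s_\ell}\to h$ in measure with $m(\{|h-h_{s_\ell}|\ge 2^{-\ell}\})<2^{-\ell}$; for each $\ell$ choose via membership in $A_{s_\ell,2^{\ell+1}}$ an index $j_\ell$ (which, by throwing away finitely many small terms, may be taken strictly increasing) with $m(\{x:\ |f(\gamma(x,r_{j_\ell}))-h_{s_\ell}(x)|\ge 2^{-\ell-1}\})<2^{-\ell-1}$; then along $(r_{j_\ell})$ one has $f(\gamma(\cdot,r_{j_\ell}))\to h$ in measure, and a further subsequence converges a.e., which is exactly the required property (the statement only asks for a subsequence of $(r_j)$). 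The main obstacle is the analytic heart of the density step: producing, on a strictly pseudoconvex (or, locally, strictly convex) hypersurface $bD_{r_j}$ sitting inside $D$, a holomorphic function on all of $D$ that is uniformly close to a prescribed function on most of that hypersurface while staying close to $g$ on the compact region it encloses. This is where the $\mathcal C^2$-closeness of $bD_{r_j}$ to $bD$ provided by ($\dag$), the embedding Theorem \ref{thm-Died}, a Mergelyan-type result for strictly convex boundary patches (as in \cite{HakSib}), and the Kallin-type Corollary \ref{cor:pol} all have to be combined carefully; everything else is Baire category and elementary measure theory.
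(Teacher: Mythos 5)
Your overall architecture --- Baire category over a countable dense family of continuous targets, openness via upper semicontinuity of the measure of the bad set, density via the Diederich--Forn\ae ss--Wold embedding combined with Kallin's lemma and Oka--Weil, and the final ``in measure along subsequences for a dense family implies a.e.\ along a subsequence for every measurable $h$'' packaging --- is exactly the paper's: their Corollary \ref{approx-result} is precisely your density claim for $A_{s,k}$, and their use of Step 3 of Bayart's proof is your diagonal extraction. The gap sits at the point you yourself flag as the analytic heart. You invoke a ``one-sided Mergelyan-type approximation for compacts of the form (closed domain) $\cup$ (patch of the boundary)'' on a strictly convex piece, pointing to \cite{HakSib}. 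No such theorem is available off the shelf in the generality needed: Hakim--Sibony's result concerns certain star-shaped compacts in $\overline{\B}_N$, and the paper is explicitly written to \emph{avoid} it (that is the raison d'\^etre of Lemma \ref{lem:cups}). Moreover, the compact you propose to approximate on --- a closed patch of $bV$ together with $K$ --- is in general not polynomially convex: a sizeable closed piece of the boundary of a convex domain has a nontrivial polynomial hull (think of a large spherical patch), so Oka--Weil does not apply to it and a Kallin-type argument cannot be run on it as stated.

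The idea that closes this gap in the paper is elementary but essential, and it is the one missing from your sketch. First, since the target $\varphi$ is continuous, replace it by a \emph{locally constant} function: cover most of $bD$ in measure by finitely many small disjoint open pieces $U_i$ on which $\varphi$ is within $\epsilon$ of a constant $c_i$. Second, inside each straightened strictly convex piece, exhaust most of the measure of the corresponding boundary patch by finitely many \emph{disjoint closed caps} (closures of components of $V\setminus H$ for real hyperplanes $H$); each closed cap is compact and convex, hence polynomially convex, and strict convexity guarantees that small caps capture a definite proportion of the measure of small cubes in graph coordinates --- this is the content of Lemma \ref{lem:cups}. Corollary \ref{cor:pol} then makes the union of all these caps with $\overline{D_r}$ an $\mathcal O(D)$-convex compact, and the function equal to $h$ near $\overline{D_r}$ and to $c_i$ near the caps sitting over $U_i$ is holomorphic in a neighbourhood of that compact, so Oka--Weil finishes the density argument. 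With this substitution your proof becomes the paper's; without it, the density of $A_{s,k}$ is asserted rather than proved. (The rest of your write-up --- openness, the Lusin reduction, and the subsequence extraction staying inside the prescribed sequence $(r_j)$ --- is correct and, if anything, slightly cleaner than the paper's indexing.)
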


The proof of Theorem \ref{thmBrestated} is based on Lemma \ref{approx-lemma-imp} below about holomorphic approximation on $D$. To obtain this lemma, we will first prove a polynomial approximation result in strictly convex domains (Lemma \ref{lem:cups}), and then use embeddings of $D$ into strictly convex domains (Theorem \ref{thm-Died}).

For a convex set $V$ of $\C^N$ and $H$ a real hyperplane in $\C^{N}$, we call a \emph{cap} of $V$ any connected component of the set $V \cap (\C^N\setminus H)$ (which has at most two components). A closed cap of $V$ will be the closure in $\overline{V}$ of a cap of $V$. Note that any closed cap is a convex compact subset of $\mathbb{C}^N$ and, as such, it is polynomially convex.

\begin{lemme}\label{lem:cups}
	Let $V$ be a strictly convex domain and $U$ a domain of $\C^N$ intersecting $bV$. Then for every $\epsilon >0$ there are finitely many disjoint closed caps $\Gamma_1, \ldots, \Gamma_n$ contained entirely in $U\cap \overline{V}$ such that $$m((U \cap bV)\setminus \Gamma)<\epsilon,$$ where $ \Gamma=\Gamma_1\cup\ldots \cup \Gamma_n$. Moreover, given any polynomially convex compact subset $K$ of $V$ with $\Gamma \cap K = \emptyset$, $\Gamma \cup K$ is polynomially convex.
\end{lemme}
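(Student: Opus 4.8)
The plan is to construct the caps by slicing $V$ with a family of parallel real hyperplanes, and then to obtain polynomial convexity of $\Gamma \cup K$ via Kallin's lemma (here in the form of Corollary \ref{cor:pol}).

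\medskip

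\noindent\textbf{Step 1: finding the caps.} Fix $x_0 \in U \cap bV$. Since $V$ is strictly convex, there is a (complex-)linear functional $\ell$ on $\C^N$ and a real number $c_0$ so that $\{\mathrm{Re}\,\ell = c_0\}$ is a supporting hyperplane of $V$ touching $\overline V$ exactly at a single point near $x_0$; more importantly, the level sets $H_c = \{\mathrm{Re}\,\ell = c\}$ for $c$ slightly below $c_0$ cut off small caps $C_c$ of $V$ containing points of $bV$ near $x_0$, with $\mathrm{diam}(C_c)\to 0$ as $c \to c_0^-$. Choosing $c$ close enough to $c_0$ guarantees $\overline{C_c}\subset U \cap \overline V$ (by the diameter estimate) and $\overline{C_c}\cap K = \emptyset$ (since $K$ is compact in $V$, it lies in $\{\mathrm{Re}\,\ell < c_0 - \delta\}$ for some $\delta>0$). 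This produces \emph{one} closed cap inside $U$; to cover almost all of $U \cap bV$ we run this over a finite subcover. Precisely, $U \cap bV$ is a relatively open subset of the $(2N-1)$-manifold $bV$, hence $\sigma$-compact; its surface measure is locally finite, so we may pick a compact $E \subset U \cap bV$ with $m((U\cap bV)\setminus E) < \epsilon$. Around each $x \in E$ we have a cap $\overline{C_x} \subset U \cap \overline V$ with $x$ in the relative interior of $\overline{C_x}\cap bV$; finitely many of these relative interiors cover $E$. Shrinking and discarding to remove overlaps (two caps coming from hyperplanes with the same $\ell$ are nested, and caps from transverse hyperplanes can be made disjoint by pushing the cutting level; alternatively replace an overlapping pair by one slightly larger cap of the same type), we arrive at finitely many pairwise disjoint closed caps $\Gamma_1,\dots,\Gamma_n \subset U \cap \overline V$ whose union still covers $E$, hence misses at most an $\epsilon$-set of $U \cap bV$. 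Each $\Gamma_i$ is convex and compact, so polynomially convex, and each is disjoint from $K$.

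\medskip

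\noindent\textbf{Step 2: polynomial convexity of $\Gamma \cup K$.} Apply Corollary \ref{cor:pol} with $D = \C^N$ and the compacts $K_0 = K$, $K_1 = \Gamma_1,\dots,K_n = \Gamma_n$, all of which are polynomially convex. For the $j$-th step I must separate $\Gamma_j$ from $K \cup \Gamma_1 \cup \dots \cup \Gamma_{j-1}$ by the polynomial hulls of the image under a single polynomial. Recall $\Gamma_j$ is a cap cut off by a hyperplane $\{\mathrm{Re}\,\ell_j = c_j\}$, say $\Gamma_j \subset \{\mathrm{Re}\,\ell_j \ge c_j\}$, while each other piece $\Gamma_i$ ($i\neq j$) is a closed cap disjoint from $\Gamma_j$ and $K$ is disjoint from $\Gamma_j$; in particular $K \cup \bigcup_{i<j}\Gamma_i$ is a compact subset of $\overline V$ that does not meet $\Gamma_j$. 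The natural choice is the linear polynomial $p = \ell_j$. Since $p$ is affine and $\Gamma_j$, $K\cup\bigcup_{i<j}\Gamma_i$ are compact, $\widehat{p(\Gamma_j)} = p(\Gamma_j)$ and $\widehat{p(K\cup\bigcup_{i<j}\Gamma_i)} = p(K\cup\bigcup_{i<j}\Gamma_i)$ are convex compact subsets of $\C$ (images of convex sets under an affine map, or convex hulls thereof). We have $\mathrm{Re}\,p \ge c_j$ on $p(\Gamma_j)$; the point is to arrange $\mathrm{Re}\,p < c_j$ on all of $K\cup\bigcup_{i<j}\Gamma_i$. For $K$ this is the choice of $c_j$ close to the supporting value $c_{0,j}$ made in Step 1 (shrink further if necessary so that $c_j > \sup_K \mathrm{Re}\,\ell_j$). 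For the other caps $\Gamma_i$, however, this need not hold as stated — a cap $\Gamma_i$ cut by a transverse hyperplane can poke into $\{\mathrm{Re}\,\ell_j \ge c_j\}$ even while staying disjoint from $\Gamma_j$ — so the honest fix is to choose the separating polynomial differently for each pair, or, better, to order and shrink the caps in Step 1 so that this half-space separation genuinely holds. I would achieve this by taking all caps small: if every $\Gamma_i$ has diameter $< \eta$ and they are pairwise disjoint, then for $\eta$ small the convex hull of $\Gamma_i$ lies in a thin neighbourhood of a point of $bV$, and for each $j$ one can slide the cutting level $c_j$ so that $\{\mathrm{Re}\,\ell_j = c_j\}$ separates $\Gamma_j$ from the finitely many other (tiny, and disjoint) caps and from $K$ simultaneously — this is possible because $\Gamma_j$ together with the hyperplane tangent direction is pinned near one boundary point while everything else is a positive distance away. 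With that, $p(\Gamma_j)$ and $p(K\cup\bigcup_{i<j}\Gamma_i)$ are convex compacts in $\C$ separated by the vertical line $\{\mathrm{Re} = c_j\}$, hence have disjoint (polynomial = convex) hulls, and Corollary \ref{cor:pol} gives polynomial convexity of the union.

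\medskip

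\noindent\textbf{Main obstacle.} The only real subtlety is the bookkeeping in Step 1--2: producing caps that are simultaneously (a) pairwise disjoint, (b) each separable by a real hyperplane from the union of $K$ and all previously chosen caps, and (c) covering $U\cap bV$ up to measure $\epsilon$. Disjointness alone is easy, and measure coverage is a routine compactness argument; the interaction — making the separating hyperplane for $\Gamma_j$ avoid the other caps — is what forces the caps to be taken small and placed carefully, and is where I expect to spend most of the effort. Everything else (convexity of caps $\Rightarrow$ polynomial convexity, affine image of a convex set is convex, convex sets in $\C$ are polynomially convex, invoking Corollary \ref{cor:pol}) is standard.
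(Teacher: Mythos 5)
Your overall strategy (small caps near boundary points, then Kallin via Corollary \ref{cor:pol}) matches the paper's, but Step~1 contains a genuine gap, and the difficulty you flag in Step~2 is not actually there. In Step~1 you first cover a compact $E\subset U\cap bV$ of almost full measure by finitely many closed caps and then try to ``disjointify''; this order cannot work. The traces $\Gamma_i\cap bV$ of pairwise disjoint closed caps are pairwise disjoint relatively closed subsets of $bV$, so if they cover a \emph{connected} $E$ then one single cap must already contain $E$ --- impossible in general, since a cap trace lies on one side of a hyperplane that meets $V$ and therefore misses a set of positive measure. Hence disjointification necessarily forces you to discard a further portion of $E$, and neither of your repair mechanisms controls how much: pushing a cutting level shrinks a cap and loses coverage by an amount you never estimate, while merging an overlapping pair into one larger cap can cascade (the merged cap may overlap other caps and need not stay inside $U$ or away from $K$). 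The paper reverses the order: it writes $bV\cap U$ as a graph $F(U')$ over $U'\subset\R^{2N-1}$, chooses \emph{disjoint} cubes $C_1,\dots,C_n$ in $U'$ leaving out only a small-measure set, and inscribes in each $F(C_j)$ a cap with $\Gamma_j\cap bV\subset F(C_j)$ and $m(\Gamma_j)\geq K_1m(C_j)$, so disjointness is built in from the start. (Even then a single inscribed cap covers only a definite fraction of $F(C_j)$, so driving the uncovered measure below $\epsilon$ requires iterating the construction on the open uncovered remainder, a Vitali-type step the paper leaves implicit; your ``routine compactness argument'' for the measure coverage is precisely this non-routine point.)

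Step~2, by contrast, needs none of the careful placement you propose. By the paper's definition a cap is a connected component of $V\setminus H$, so for $V$ convex the closed cap is exactly $\overline V\cap\{\mathrm{Re}\,\ell_j\geq c_j\}$. Consequently \emph{any} compact subset of $\overline V$ disjoint from $\Gamma_j$ --- in particular $K\cup\bigcup_{i<j}\Gamma_i$ --- lies in the open half-space $\{\mathrm{Re}\,\ell_j<c_j\}$, hence by compactness in $\{\mathrm{Re}\,\ell_j\leq c_j-\delta\}$ for some $\delta>0$. A cap cannot ``poke into'' $\{\mathrm{Re}\,\ell_j\geq c_j\}$ while remaining disjoint from $\Gamma_j$, because $\Gamma_j$ already contains every point of $\overline V$ on that side. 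So $p=\ell_j$ separates the (convex, hence polynomial) hulls of the images with no extra hypotheses, and Corollary \ref{cor:pol} applies directly. In short, the real obstacle is the disjointness-versus-coverage tension in Step~1, which the paper's cube decomposition is designed to resolve, not the separation argument in Step~2.
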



\begin{proof}
 
 
 By compactness and the implicit function theorem, we can assume that there exists an open set $U'$
 in $\mathbb{R}^{2N-1} $ such that 
 \[
 bV\cap U = \{(t, f(t)):\ t\in U'\},
 \]
 and that the map $F:t \mapsto (t,f(t))$ is a $\mathcal C^2$ diffeomorphism from $\overline{U'}$ onto $bV\cap \overline{U}$.
 Now it is not difficult to check that the strict convexity of $V$ and the smoothness of $F$ imply the existence of $0<K_1\leq K_2<\infty$ such that for each open cube $C$ contained in $U'$, there exists a closed cap $\Gamma$ of $V$ such that $F(C)\supset \Gamma\cap bV$ and $K_1m(C)\leq m(\Gamma)\leq K_2 m(C)$. Moreover, for $C$ small enough, we shall have $\Gamma \subset U$. Thus the lemma follows from the choice of finitely many disjoint open cubes $C_1,\ldots ,C_n$ in $U'$ such that the measure of $U'\setminus (\bigcup_{j=1}^n C_j)$ is small enough and, again, the fact that $F$ is a diffeomorphism.
 
 The last assertion of the lemma follows from Corollary \ref{cor:pol}.
\end{proof}

\medskip{}

\begin{lemme}\label{approx-lemma-imp}
 Let $D$ be a strictly pseudoconvex domain in $\C^N$. For any $\epsilon>0$, any compact $\mathcal O(D)$-convex set $K\subset D$, any strictly pseudoconvex domain $G\subset D$ sufficiently close to $D$ in $\mathcal{C}^2$-topology and any open set $U$ intersecting $bG$, there exists a compact set $\Gamma$ in $U\cap\overline G$ such that $m((U\cap bG)\setminus \Gamma)<\epsilon$, $K\cap \Gamma = \emptyset$ and $\Gamma \cup K$ is $\mathcal O(D)$-convex in $D$. 
\end{lemme}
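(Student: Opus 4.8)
\textbf{Proof plan for Lemma \ref{approx-lemma-imp}.}

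The plan is to transport the convex situation of Lemma \ref{lem:cups} to the strictly pseudoconvex domain $D$ via the Diederich--Forn\ae ss--Wold embedding theorem (Theorem \ref{thm-Died}), and then to patch together finitely many local pieces using Corollary \ref{cor:pol}. First I would cover the compact set $\overline{U\cap bG}$ — which, since $G$ is close to $D$, lies in a small neighbourhood of $bD$ — by finitely many balls $B_{x_1},\ldots, B_{x_p}$ furnished by Theorem \ref{thm-Died}, so that on each $B_{x_i}$ there is a holomorphic embedding $k_i:\overline D\to\overline{\B_N}$ making $k_i(B_{x_i}\cap bD)$ strictly convex in $k_i(D)$. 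Provided $G$ is sufficiently close to $D$, the image $k_i(G\cap B_{x_i})$ is still contained in a genuinely strictly convex domain $V_i$, with $k_i(B_{x_i}\cap bG)$ sitting inside $bV_i$ as a relatively open piece; one then applies Lemma \ref{lem:cups} inside $V_i$ with $U':=k_i(U\cap B_{x_i})$ to produce finitely many disjoint closed caps whose union $\Gamma_i'$ covers all but an $\epsilon$-fraction (with respect to surface measure) of $k_i(U\cap B_{x_i}\cap bG)$.

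Next I would pull these caps back: set $\Gamma_i := k_i^{-1}(\Gamma_i')$, a compact subset of $U\cap B_{x_i}\cap\overline G$. Since $k_i$ is a biholomorphism onto its image, $\Gamma_i$ is $\mathcal O(D)$-convex (holomorphic convexity is preserved under the biholomorphism $k_i$, and a closed cap is polynomially convex in $V_i$, hence $\mathcal O(k_i(D))$-convex). Shrinking $\Gamma_i$ slightly if necessary, I can also arrange $\Gamma_i\cap K=\emptyset$, because $K$ is a fixed compact subset of $D$ away from $bD$ while the $\Gamma_i$ accumulate at $bD$ once $G$ is close enough; more carefully, one chooses the caps so small that $\Gamma_i$ lies in the thin shell between $bG$ and $bD$, which is disjoint from $K$. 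By a standard partition-of-unity / measure argument, finitely many such local families, suitably intersected with a measurable partition of $U\cap bG$ subordinate to the cover $\{B_{x_i}\}$, yield a compact $\Gamma:=\Gamma_1\cup\cdots\cup\Gamma_q$ with $m((U\cap bG)\setminus\Gamma)<\epsilon$, the individual pieces being pairwise disjoint (one discards overlaps, which only costs measure, controllable by taking the $\epsilon$'s of the local applications small).

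It remains to check that $\Gamma\cup K$ is $\mathcal O(D)$-convex, and this is where Corollary \ref{cor:pol} enters and where the main obstacle lies: one needs, for each $j$, a function $f\in\mathcal O(D)$ separating $\widehat{f(\Gamma_j)}_D$ from $\widehat{f(K_0\cup\cdots\cup K_{j-1})}_D$, where the $K_i$ enumerate $K$ together with the $\Gamma_i$'s built so far. For the piece $\Gamma_j\subset B_{x_{i(j)}}$, the natural candidate is (a component of) the embedding coordinate $k_{i(j)}$ itself: since $\Gamma_j'$ is a \emph{cap}, it lies on one side of a real hyperplane $H$ in $\C^N$, so a suitable complex-linear functional $\ell$ on $\C^N$ has $\mathrm{Re}\,\ell$ bounded below on $k_{i(j)}(\Gamma_j)$ by a constant strictly larger than its supremum on the image of everything built earlier — this requires arranging that the earlier pieces and $K$ map, under $\ell\circ k_{i(j)}$, into a half-plane disjoint from the cap's half-plane, which one secures by choosing the hyperplanes defining the caps to be near-tangent to $bV_{i(j)}$ and using strict convexity so that the caps are ``thin'' and the rest of the domain lies strictly on the other side. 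Then $f:=\ell\circ k_{i(j)}\in\mathcal O(D)$ does the job, because $\widehat{f(\Gamma_j)}_D$ and $\widehat{f(\bigcup_{i<j}K_i)}_D$ are polynomial hulls of planar compacta lying in disjoint half-planes, hence disjoint. The delicate bookkeeping is to make all these half-plane separations hold simultaneously for a \emph{fixed} finite collection; I expect this to be handled by first fixing the (finitely many) balls and embeddings, then choosing the caps small enough — and their defining hyperplanes close enough to tangency — that each cap is separated from the union of $K$ and \emph{all} other caps at once, after which the inductive hypothesis of Corollary \ref{cor:pol} is met and the proof concludes.
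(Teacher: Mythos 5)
Your proposal follows essentially the same route as the paper: localize near $bG$ via the Diederich--Forn\ae ss--Wold embeddings of Theorem \ref{thm-Died}, apply Lemma \ref{lem:cups} in the strictly convex models, pull the caps back, and invoke Corollary \ref{cor:pol} with separating functions of the form $\ell\circ k_x$. The one step you flag as delicate (making all half-plane separations hold simultaneously) is handled in the paper by first fixing finitely many \emph{pairwise disjoint} open sets $U_1,\ldots,U_n$, each inside one ball $B_x$ and each avoiding $K$, after which the separation is automatic: everything other than a given closed cap lies in $k_x(\overline G)\subset \overline{V_x}$ and is disjoint from that cap, hence lies strictly on the opposite side of its defining hyperplane.
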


\begin{proof}
 Up to obvious changes, we may and shall assume that $U=\mathbb{C}^N$. According to \cite{Died-For-Wold} (Theorem \ref{thm-Died}), for each $x\in bD$ there is a ball $B_x$ centred at $x$ and an embedding $k_x:\overline D\to \overline \B_N$ with $k_x(x) = (1,0)\in \C\times \C^{N-1}$ such that each point of $k_x(B_x \cap bD)$ is a point of strict convexity of $k_x(D)$. If $G$ is close enough to $D$, then
 each point of $k_x(B_x \cap bG)$ is a point of strict convexity of $k_x(G)$. Thus one can consider a strictly convex domain $V_x$ containing $k_x(G)$ such that $k_x(G) \cap k_x(B_x)= V_x \cap k_x (B_x)$. Moreover, we shall assume that $bG \subset \cup _{x\in bD}B_x$, hence one can find finitely many disjoint open sets $U_1,\ldots ,U_n$ such that each of them lies in some $B_x$, $K\cap \overline{U_j} =\emptyset$, and $m(bG \setminus \cup U_j) <\epsilon/2$.
 
 Let us fix $j$ and take $x$ such that $U_j\subset B_x$ and let us apply Lemma~\ref{lem:cups} to $V_x$ and the domain $k_x(U_j)$. It provides us with a finite union of disjoint closed caps, that we shall denote by $\tilde \Gamma_j$, such that $m((k_x(U_j)\cap b V_x)\setminus\tilde \Gamma_j)<\eta$, where $\eta$ will be chosen below. Let us define $\Gamma_j:=k_x^{-1}(\tilde \Gamma_j)$ and observe that $\eta$ can be chosen small enough so that
 \[
 m((U_j\cap b G)\setminus \Gamma_j)<\epsilon/(2n).
 \]
 Setting $\Gamma =\cup _j \Gamma_j$, we get $m(bG\setminus \Gamma)<\epsilon$.
 
 Note also that each $\tilde \Gamma_j$ is polynomially convex (Lemma \ref{lem:cups}) and is separated from 
 $k_x(K)$ and $k_x(U_i)$ for $i\neq j$ by a polynomial. Composing with $k_x$ we get that each $\Gamma_j$ is $\mathcal{O}(D)$-convex and separated from $K$ and $\Gamma_i$, $i\neq j$, by a function holomorphic in $D$. This, according to Corollary~\ref{cor:pol}, means that $\Gamma \cup K$ is $\mathcal O(D)$ convex.
 
\end{proof}

We now deduce the following, which is the key-ingredient for Theorem \ref{thmBrestated}.

\begin{coro}\label{approx-result}Let $\epsilon>0$, let $K$ be $\mathcal O(D)$-convex in $D$, $h\in \mathcal{O}(D)$ and let $\varphi$ be continuous on $bD$. There exists $r_0 \in (0,1)$ such that for any $r\in (r_0,1)$, there exist a compact set $E\subset bD$ and $f\in \mathcal{O}(D)$ such that
	\begin{enumerate}
		\item $m(bD \setminus E) < \epsilon$;
		\item $\sup_{x\in E}|f\circ \gamma (x,r)-\varphi(x)|<\epsilon$;
		\item $\sup_{z\in K}|f(z)-h(z)|<\epsilon$.
	\end{enumerate}
\end{coro}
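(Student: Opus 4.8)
The proof of Corollary \ref{approx-result} should be a fairly direct combination of Lemma \ref{approx-lemma-imp} with a standard Oka--Weil approximation argument on a compact pieced together from $K$, a shrunken hypersurface, and a thickening of $\Gamma$. First I would use assumption $(\dag)$: for $r$ close enough to $1$, $bD_r$ bounds a smooth strictly pseudoconvex domain $D_r$ close to $D$ in $\mathcal{C}^2$-topology, and $D_r \supset K$; fix $r_0$ so that this holds for all $r \in (r_0,1)$. Apply Lemma \ref{approx-lemma-imp} with this $G = D_r$ (and $U = \mathbb{C}^N$, say) and with $\epsilon$ replaced by a small $\epsilon'$ to be chosen: it produces a compact set $\Gamma \subset \overline{D_r}$ with $m(bD_r \setminus \Gamma) < \epsilon'$, $K \cap \Gamma = \emptyset$, and $K \cup \Gamma$ being $\mathcal{O}(D)$-convex.

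\emph{Transporting from $bD_r$ to $bD$.} The set $\Gamma$ lives near $bD_r$, not near $bD$, and the target function $\varphi$ is prescribed on $bD$; the bridge is the map $x \mapsto \gamma(x,r)$, which for $r$ close to $1$ is a $\mathcal{C}^1$ diffeomorphism of $bD$ onto $bD_r$ (this follows from $\gamma$ being $\mathcal{C}^1$ and hitting $bD$ transversally, together with $(\dag)$, exactly as used in the previous constructions). Let $E := \{x \in bD : \gamma(x,r) \in \Gamma\}$, the pullback of $\Gamma$ under this diffeomorphism. Since the diffeomorphism and its inverse have bounded Jacobians (uniformly for $r$ near $1$, because $\gamma$ is $\mathcal{C}^1$ up to $bD\times[0,1]$), $m(bD \setminus E) \le C\, m(bD_r \setminus \Gamma) < C\epsilon'$, so choosing $\epsilon' < \epsilon/C$ gives item (1). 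This handles the measure bookkeeping.

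\emph{The approximation step.} On the $\mathcal{O}(D)$-convex compact set $K \cup \Gamma$ define a function $\psi$ holomorphic in a neighbourhood: on a neighbourhood of $K$ put $\psi = h$, and on a neighbourhood of $\Gamma$ put $\psi$ equal to (a holomorphic extension / smooth approximation of) $\varphi \circ \gamma(\cdot,r)^{-1}$ restricted near $\Gamma$. Here there is a small subtlety: $\varphi$ is only continuous on $bD$, not holomorphic, so one cannot literally use it as holomorphic data on $\Gamma$. The fix is to first approximate $\varphi$ uniformly on $bD$ by the restriction of a function holomorphic in a neighbourhood of $\overline{D}$ — or even just by a polynomial, after shrinking — to within $\epsilon/3$; since $\overline{D}$ has a neighbourhood basis of $\mathcal{O}(D)$-convex (indeed Runge) sets this is standard. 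Call this holomorphic $\tilde\varphi$. Then $\psi$ defined as $h$ near $K$ and $\tilde\varphi$ near $\Gamma$ is holomorphic in a neighbourhood of $K \cup \Gamma$ (the two neighbourhoods are disjoint since $K \cap \Gamma = \emptyset$). By the Oka--Weil theorem (cited version for $\mathcal{O}(D)$-convex sets) there is $f \in \mathcal{O}(D)$ with $\sup_{K \cup \Gamma}|f - \psi| < \epsilon/3$. Then item (3) is immediate from $\psi = h$ on $K$, and for item (2), for $x \in E$ we have $\gamma(x,r) \in \Gamma$, so $|f(\gamma(x,r)) - \varphi(x)| \le |f(\gamma(x,r)) - \tilde\varphi(\gamma(x,r))| + |\tilde\varphi(\gamma(x,r)) - \varphi(x)| < \epsilon/3 + \epsilon/3 < \epsilon$, where the second term is controlled because $\gamma(x,r)$ is close to $x \in bD$ (so $|\tilde\varphi(\gamma(x,r)) - \tilde\varphi(x)|$ is small by uniform continuity of $\tilde\varphi$ on a neighbourhood of $\overline D$, uniformly in $x$ for $r$ near $1$) and $|\tilde\varphi(x) - \varphi(x)| < \epsilon/3$ on $bD$. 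Shrinking $r_0$ once more absorbs these last estimates.

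\emph{Main obstacle.} The genuinely delicate point is not the Oka--Weil step but making sure all the "for $r$ close enough to $1$" choices can be made uniformly and in the right order: one needs $r_0$ chosen so that simultaneously (a) $D_r$ is a strictly pseudoconvex domain close to $D$ containing $K$ with $x\mapsto\gamma(x,r)$ a diffeomorphism onto $bD_r$, (b) the Jacobian bounds $C$ are uniform, and (c) $\sup_{x \in bD}|\tilde\varphi(\gamma(x,r)) - \tilde\varphi(x)|$ is small; and crucially $\tilde\varphi$ must be chosen (depending on $\varphi$ and $\epsilon$ but not on $r$) \emph{before} fixing the final $r_0$, so that the modulus-of-continuity estimate in (c) makes sense. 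Laying these dependencies out carefully — rather than any hard analysis — is where the care goes.
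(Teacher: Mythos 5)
Your overall architecture --- choose $r_0$ via $(\dag)$, apply Lemma \ref{approx-lemma-imp} to $G=D_r$, pull $\Gamma$ back to $E\subset bD$ by the diffeomorphism $\gamma_r:x\mapsto\gamma(x,r)$, and finish with Oka--Weil on the $\mathcal O(D)$-convex set $K\cup\Gamma$ --- is the same as the paper's. But the step you flag as a ``small subtlety'' is in fact the crux, and your fix for it fails: an arbitrary continuous $\varphi$ on $bD$ \emph{cannot} be approximated uniformly on $bD$ by functions holomorphic in a neighbourhood of $\overline D$, nor by polynomials. Since the polynomial (or $\mathcal O$-) hull of $bD$ is all of $\overline D$, the maximum principle shows that any uniform limit on $bD$ of such functions extends to a function continuous on $\overline D$ and holomorphic in $D$; already $\varphi(z)=\bar z_1$ on $b\mathbb B_N$ admits no such approximation (restrict to the slice $z_2=\dots=z_N=0$ and compare Fourier coefficients). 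The existence of a neighbourhood basis of Runge sets for $\overline D$ is beside the point, because $\varphi$ is not holomorphic to begin with. So your $\tilde\varphi$ does not exist and item (2) is not established.

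The paper's way around this is precisely why Lemma \ref{approx-lemma-imp} is stated for a general open set $U$ meeting $bG$ rather than only $U=\mathbb C^N$: one first uses the continuity of $\varphi$ to cover $bD$, up to a set of measure $<\epsilon/2$, by finitely many open sets $U_1,\dots,U_n$ with pairwise disjoint closures on which $\varphi$ oscillates by less than $\epsilon/2$, picks constants $c_i$ with $|\varphi-c_i|<\epsilon/2$ on $bD\cap U_i$, and takes as holomorphic data the \emph{locally constant} function equal to $c_i$ on disjoint neighbourhoods $V_r^i$ of the pieces $\gamma_r(bD\cap U_i)$. A locally constant function on a disjoint union of open sets is holomorphic, so Oka--Weil applies to it together with $h$ near $K$; the price is a second measure loss of $\epsilon/2$ coming from $(V_r\cap bD_r)\setminus\Gamma_r$, which is exactly the budget Lemma \ref{approx-lemma-imp} provides. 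If you replace your global $\tilde\varphi$ by this piecewise-constant device (applying Lemma \ref{approx-lemma-imp} with $U=\bigcup_i V_r^i$ instead of $U=\mathbb C^N$), the rest of your argument, including your careful ordering of the choices of $r_0$, goes through.
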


\begin{proof}We fix $\epsilon >0$. By continuity of $\varphi$ on $bD$, there exists finitely many connected open sets $U_1,\ldots ,U_n \subset \mathbb{C}^N$, with $\overline{U_i}\cap \overline{U_j}=\emptyset$ if $i\neq j$, and $c_1,\ldots,c_n\in \mathbb{C}$ such that $|\varphi(x)-c_i|<\epsilon/2$ for any $x\in bD\cap U_i$, $i=1,\ldots,n$, and $m(bD\setminus (U:=\cup _iU_i))<\epsilon/2$.

Since $\gamma$ is $\mathcal{C}^1$ on $bD\times [0,1]$ and $\gamma (x,1)=x$ for any $x\in bD$, there exists $r_0$ such that for any $r_0\leq r\leq 1$, the map $\gamma_r:x\mapsto \gamma(x,r)$ is a $\mathcal{C}^1$ diffeomorphism from $bD$ onto $bD_r$. Moreover for any $A\subset bD$, $m(A)\approx m(\gamma_r(A))$, where the constants in $\approx$ do not depend on $A$ or on $r\geq r_0$. We define $V_r$ as an open set of $\mathbb{C}^N$ such that $V_r\cap K=\emptyset$ and $V_r=\cup _i V_r^i$ with $V_r^i=\gamma_r(bD\cap U_i)$. In particular, we thus have $m(bD_r\setminus V_r)\approx \epsilon/2$, and the function $\tilde{\varphi}$ defined as $\tilde{\varphi}(z)=c_i$ if $z\in V_r^i$ is well-defined and holomorphic in $V_r$.
	
By assumption ($\dag$), we can assume that $r_0$ is even closer to $1$ so that for any $r\in (r_0,1)$, $bD_r$ is the boundary of a strictly pseudoconvex domain in $D$ that contains $K$, whence, according to Lemma \ref{approx-lemma-imp}, there exists a compact set $\Gamma_r$ in $V_r$ with $m(V_r\cap bD_r\setminus \Gamma_r)<\epsilon /2$, such that $\Gamma_r \cup K$ is $\mathcal{O}(D)$-convex in $D$.

Thus there exists $f\in \mathcal{O}(D)$ such that $|f(z)-\tilde{\varphi}(z)|<\epsilon/2$ for any $z\in bD_r\cap\Gamma_r$ and $|f(z)-h(z)|<\epsilon$ for any $z\in K$. Setting $E=\gamma_r^{-1}(bD_r\cap \Gamma_r)$ completes the proof, since $\epsilon$ is arbitrary.

\end{proof}

\begin{proof}[Proof of Theorem \ref{thmBrestated}]The rest of the proof is very similar to steps 2 and 3 of that of \cite[Theorem 1]{Bay}. Let $(K_s)$ be $\mathcal O(D)$ convex sets that exhaust $D$, i.e. $K_j$ is contained in the interior of $K_{s+1}$ and $\bigcup_s K_s =D$. We also fix a dense sequence $(h_j)$ in the set $\mathcal{C}(bD)$ of continuous functions on $bD$. We denote by $\mathcal{E}_k$ the set of all compact subsets $E$ of $bD$ such that $m(bD\setminus E)>1-1/2^k$. For positive integers $j,k,l,s$, we define the set
	\[
	U(j,k,l,s)=\bigcup_{r\in(r_j,1)}\bigcup _{E\in \mathcal{E}_k}\left\{f\in \mathcal{O}(D):\,|f\circ \gamma (x,r)-h_j(x)|<1/2^k,\,x\in E\right\}.
	\]
It is clear that each $U(j,k,l,s)$ is an open subset of $\mathcal{O}(D)$ and, by Corollary \ref{approx-result}, that it is dense. So, by the Baire Category Theorem, we get that $U:=\bigcap_{j,k,l,s}U_{j,k,l,s}$ is a dense $G_{\delta}$-subset of $\mathcal{O}(D)$.

Let us take $f\in U$. It remains to prove that $f$ belongs to $\mathcal{W}(r_j)$. We thus fix $h$ measurable on $bD$. Proceeding as in Step 3 of the proof of \cite[Theorem 1]{Bay}, we can build by induction an increasing sequence $(r_{j_k})$, $r_{j_k}\to 1$, and a sequence $(E_k)$ in $\mathcal{E}_k$ such that
\[
|f\circ\gamma(x,r_{j_k})-h(x)|<\frac{1}{2^k},\quad x\in E_k.
\]
Setting $E=\bigcup_{n\geq 1}\bigcap_{k\geq n}E_k$, it is easily seen that $m(E)=1$ and that $f\circ\gamma(x,r_{j_k}) \to h(x)$ as $k\to \infty$ for any $x\in E$.
\end{proof}

\medskip{}

\subsubsection*{The case of the ball}In the case of the ball, Theorem \ref{thmBrestated} can be strengthened. Assume that we are given a family of maps $\{\gamma_z:b\mathbb{B}_N \times [0,1]\to \B_N;\,z\in \mathbb{B}_N\}$ such that each $\gamma_z$ satisfies the assumptions made at the beginning of the section and, additionally, that $(z,x,r)\mapsto \gamma_z(x,r)$ is continuous on $\mathbb{B}_N\times b\mathbb{B}_N \times [0,1]$ and $(z,r)\mapsto \partial \gamma_z(x,r)/\partial r$ is continuous on $\mathbb{B}_N\times [0,1]$, for any $x\in b\mathbb{B}_N$. A typical example of such maps is $\gamma_z(x,r)=x+r(x-z)$.


\smallskip{}

Let $v:[0,+\infty[\to [0,+\infty[$ be a weight, \emph{i.e.} an increasing function such that $v(x)\to \infty$ as $x\to \infty$. We recall that the space $H_v^0$ is defined as
\[
H_v^0:=\left\{f\in \mathcal{O}(\B_N):\,\sup_{|z|=r}\frac{|f(z)|}{v(r)}\rightarrow 0\text{ as }r\rightarrow 1\right\}.
\]
It is well-known that $H_v^0$ is a Banach space endowed with the norm $\Vert f \Vert = \sup _{z\in \mathbb{B}_N}|f(z)|/v(|z|)$ and that the polynomials are dense in $H_v^0$.

\begin{thm}\label{thm-strength-ball}Let $v$ be a weight and let $(r_j)$ be a sequence in $(0,1)$ with $r_j\to 1$. With the notations of Theorem \ref{thmBrestated} and $D=\B_N$, the set $\mathcal{W}(r_j)\cap H_v^0$
is residual in $H_v^0$.
\end{thm}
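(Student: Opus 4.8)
The plan is to repeat the Baire category argument of Theorem \ref{thmBrestated}, but carried out inside the Banach space $H_v^0$ rather than in $\mathcal O(\B_N)$ with its Fréchet topology, and using the richer family $\{\gamma_z\}$ to gain an extra degree of freedom that compensates for the fact that elements of $H_v^0$ may grow near the boundary. Concretely, I would fix an exhaustion $(K_s)$ of $\B_N$ by polynomially convex compact sets, a dense sequence $(h_j)$ in $\mathcal C(b\B_N)$, and the classes $\mathcal E_k=\{E\subset b\B_N \text{ compact}:\, m(b\B_N\setminus E)<1/2^k\}$ as before. The new ingredient is to also run an index over a countable dense set $(z_l)\subset \B_N$ of base points for the maps $\gamma_{z_l}$, and to define, for positive integers $j,k,l,s$,
\[
U(j,k,l,s)=\bigcup_{r\in(r_j,1)}\ \bigcup_{E\in\mathcal E_k}\Bigl\{f\in H_v^0:\ |f\circ\gamma_{z_l}(x,r)-h_j(x)|<1/2^k \text{ for }x\in E,\ \text{and } \|f\|_{H_v^0}<\text{something}\Bigr\}.
\]
Actually the cleanest route is to keep $U(j,k,l,s)$ exactly as the trace on $H_v^0$ of an analogous open dense set, dropping any norm constraint, so that $\mathcal W(r_j)\cap H_v^0=\bigcap_{j,k,l,s}U(j,k,l,s)$ is a $G_\delta$ in $H_v^0$; the point of the $H_v^0$-machinery is only that one must verify density \emph{in the $H_v^0$-norm}, which is strictly stronger than locally uniform density.

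The heart of the matter is therefore a strengthened approximation statement: given $f_0\in H_v^0$ (equivalently, by density of polynomials, a polynomial $P$), given $\varepsilon>0$, a compact $\mathcal O(\B_N)$-convex $K$, a continuous target $\varphi$ on $b\B_N$, and a base point $z=z_l$, I must produce $f\in H_v^0$ with $\|f-P\|_{H_v^0}<\varepsilon$ yet $f\circ\gamma_z(\cdot,r)$ within $\varepsilon$ of $\varphi$ on a large-measure compact subset of $b\B_N$ for some $r$ close to $1$. The mechanism is Corollary \ref{approx-result} (whose proof goes through verbatim for each $\gamma_{z_l}$, since those maps satisfy the same standing hypotheses on $\gamma$): it gives $g\in\mathcal O(\B_N)$ with $|g-\varphi|<\varepsilon$ on $\gamma_z(E,r)$, $|g|<\delta$ on $K$, for $E$ of large measure and $r$ as close to $1$ as we like. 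One then sets $f=P+g$; the locally uniform control on $K$ is immediate, but to control $\|f-P\|_{H_v^0}=\|g\|_{H_v^0}$ one needs $g$ to be \emph{small relative to $v$ near the sphere}. This is exactly the ``arbitrarily slow growth'' phenomenon of Bayart: since $\gamma_z(E,r)$ is a compact subset of $\B_N$ sitting at distance bounded away from $0$ from $b\B_N$, one has the freedom to multiply $g$ by a polynomial that vanishes to high order where it must be large and, more efficiently, to invoke the $H^0_v$-version of the Oka--Weil/Mergelyan construction: approximate on $K\cup \Gamma_r$ (which is $\mathcal O(\B_N)$-convex by Lemma \ref{approx-lemma-imp}), then use that one can further require the approximant to have norm in $H_v^0$ as small as desired outside a fixed compact set, because $v\to\infty$. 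I would isolate this as a lemma of the form: \emph{if $L\subset\B_N$ is compact and polynomially convex and $\psi$ is holomorphic near $L$, then for every $\varepsilon>0$ there is $f\in H_v^0$ with $\|f\|_{H_v^0}<\varepsilon$ and $\sup_L|f-\psi|<\varepsilon$}, proved by the standard trick of writing $f=q\cdot(\text{Oka--Weil approximant})$ with a polynomial $q\equiv 1$ near $L$ and $|q|$ decaying fast enough against $v$, exactly as in \cite[Steps 1--2]{Bay}.

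With that lemma in hand, density of each $U(j,k,l,s)$ in $H_v^0$ follows by taking $P$ in a prescribed norm-ball, applying Corollary \ref{approx-result} with the continuous data $(h_j,0)$ on $(b\B_N,K_s)$ and the map $\gamma_{z_l}$ to get $g$ supported-small on $K_s$ and close to $h_j-P\circ\gamma_{z_l}$ where needed, then replacing $g$ by its small-$H_v^0$-norm approximant on the relevant compact set; openness is clear since membership in $U(j,k,l,s)$ is witnessed by strict inequalities on a fixed compact set after fixing $r$ and $E$. Finally, for $f\in\bigcap U(j,k,l,s)$ and an arbitrary measurable $h$ on $b\B_N$, the diagonal extraction of Step 3 of \cite[Theorem 1]{Bay} produces $r_{j_k}\to 1$ and $E_k\in\mathcal E_k$ with $|f\circ\gamma_{z}(x,r_{j_k})-h(x)|<2^{-k}$ on $E_k$ for a chosen base point; letting $E=\bigcup_n\bigcap_{k\ge n}E_k$ gives $m(E)=1$ and a.e.\ convergence. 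The main obstacle is precisely the $H_v^0$-norm control in the approximation lemma — locally uniform approximation is cheap, but pushing the approximant to be \emph{uniformly} negligible against the weight up to the boundary is where Bayart's weight trick must be imported and checked to be compatible with the Oka--Weil step on $K\cup\Gamma_r$; everything else is a routine transcription of the proof of Theorem \ref{thmBrestated} with an added countable index for the base points $z_l$ and with $\mathcal O(\B_N)$ replaced by $H_v^0$ throughout.
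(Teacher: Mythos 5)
Your overall scheme -- run Bayart's Baire-category argument inside the Banach space $H_v^0$, with density of the open sets reduced to an approximation lemma carrying $H_v^0$-norm control -- is the same as the paper's, which explicitly defers to the proof of \cite[Theorem 1]{Bay} with \cite[Lemma 1]{Bay} and \cite[Lemma 3]{Bay} replaced by Lemmas \ref{lem:Bay-lemma1} and \ref{lemma-2-ball}. You also correctly locate the crux in the norm control. But the lemma you isolate to deliver that control is false as stated: if $L\subset\B_N$ is compact and polynomially convex and $\psi$ is holomorphic near $L$ with, say, $0\in L$ and $\psi(0)=1$, then any $f$ with $\sup_L|f-\psi|<\epsilon$ satisfies $\Vert f\Vert_{H_v^0}\geq |f(0)|/v(0)\geq (1-\epsilon)/v(0)$, which cannot be $<\epsilon$ for small $\epsilon$. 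The correct statement (Bayart's Lemma 3, here Lemma \ref{lemma-2-ball}) must keep track of \emph{where} the correction is allowed to be large -- only on the caps $\Gamma$, which must be cut close enough to $b\B_N$ that $v$ already dominates the target there -- and, crucially, must come with an a priori bound on the correction \emph{on all of $\B_N$}, not merely on $K\cup\Gamma$, since the $H_v^0$-norm is a supremum over the whole ball. Your proposed mechanism, $f=q\cdot(\text{Oka--Weil approximant})$ with ``$q\equiv 1$ near $L$'', does not produce such a bound: no nonconstant polynomial is identically $1$ on an open set, and more substantively the Oka--Weil approximant on $K\cup\Gamma_r$ is completely uncontrolled off that compact set, so there is no way to choose $q$ small where the approximant is large while keeping $q$ close to $1$ on caps that meet $b\B_N$ in sets of large measure. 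Closing this gap is exactly the content of Iordan's lemma in Bayart's proof (a ball-algebra approximant with controlled global sup norm) and of the paper's Lemma \ref{lemma-2-ball}, which replaces it by the elementary cap construction of Lemma \ref{lem:cups}; your proposal neither proves nor correctly states such a lemma.

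A second, independent gap: read together with the preceding paragraph and the remark that follows it (``the family $\gamma_z$ is indexed by an uncountable set and an uncountable intersection of residual sets may be empty''), the theorem asserts one residual set in $H_v^0$ working simultaneously for \emph{every} base point $z\in\B_N$. Your device of intersecting over a countable dense set $(z_l)$ of base points yields the conclusion only for those countably many $z_l$, and there is no continuity argument supplied to pass to arbitrary $z$. The paper achieves the uniformity through the sets $E_L^*$, indexed by compact sets $L$ of base points, and through Lemma \ref{lem:Bay-lemma1} (the $K$-limit/Kor\'anyi region argument), which guarantee that the approximants and the limit function behave uniformly over $z\in L$; your proposal does not engage with either ingredient.
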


This theorem was obtained by Bayart for $\gamma_z(x,r)=z+r(x-z)$ \cite[Theorem 1]{Bay}. Observe that the statement holds true also with $\mathcal{O}(D)$ instead of $H_v^0$ and that it is already an improvement of Theorem \ref{thmBrestated} in the case $D=\B_N$ (the family $\gamma_z$ is indexed by an uncountable set and an uncountable intersection of residual sets may be empty). More surprisingly, the above theorem tells that the functions satisfying the conclusion of Theorem \ref{thmBrestated} can grow arbitrarily slowly. The proof of Theorem \ref{thm-strength-ball}, whose details are left to the reader, follows the same lines as that of \cite[Theorem 1]{Bay}, upon replacing \cite[Lemma 1]{Bay} and \cite[Lemma 3]{Bay} by, respectively, Lemmas \ref{lem:Bay-lemma1} and \ref{lemma-2-ball} below.

Let us recall that a continuous function $f$ on $\B_N$ has $K$-limit at $x\in \B_N$ if $(f(z_i))_i$ converges for any $(z_i)\subset D_{\alpha}(x)$ converging to $x$, where $D_{\alpha}(x)$ is the Kor\'anyi approach region defined for $\alpha>1$ by
\[
D_{\alpha}(x)=\left\{z\in \B_N:\,|1-\left<z,x\right>|<\frac{\alpha}{2}(1-|z|^2)\right\}.
\]
Observe that, under the assumptions made on the family $\{\gamma_z:\,z\in \B_N\}$, for any compact subset $L$ of $\B_N$, there exists $\alpha>1$ and $0<r<1$ such that
\begin{equation}\label{inclusion-utile}\left\{\gamma_z(x,r'):\,z\in L,\,r<r'<1\right\}\subset D_{\alpha}(\B_N).
\end{equation}
For $L$ a compact subset of $\B_N$ and $E\subset b\mathbb{B}_N$, we define
\[
E_L^*=\left\{\gamma_z(x,r);\,x\in E,\,r\in [0,1],\,z\in L\right\}
\]

\begin{lemme}\label{lem:Bay-lemma1}Let $L$ be a compact subset of $\B_N$ and $h$ a continuous function on $\mathbb{B}_N$ with $K$-limit almost everywhere on $b\mathbb{B}_N$. For every $\epsilon >0$, there exist a subset $E$ of $b\mathbb{B}_N$, $m(b\B_N\setminus E)<\epsilon$, and an extension $\tilde{h}$ of $h$ to $\mathbb{B}_N\cup E$ such that $\tilde{h}$ is continuous on $E_L^*$.
\end{lemme}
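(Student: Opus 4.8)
The plan is to reduce the statement to a routine application of Egorov's theorem combined with a controlled exhaustion of $b\mathbb{B}_N$. First I would fix the compact set $L\subset\mathbb{B}_N$ and $h$ continuous on $\mathbb{B}_N$ with $K$-limit almost everywhere on $b\mathbb{B}_N$. Denote by $\tilde h$ the boundary function obtained by taking $K$-limits: this is defined almost everywhere on $b\mathbb{B}_N$ and is measurable. The heart of the matter is to show that, after removing a set of small measure from $b\mathbb{B}_N$, the extension of $h$ by $\tilde h$ becomes continuous on the set $E_L^*$, which by \eqref{inclusion-utile} is — up to the part near $E$ — contained in a single Kor\'anyi region $D_\alpha(\mathbb{B}_N)$ for some $\alpha>1$ and within a distance bounded away from $bD$ once we stay off a fixed neighborhood of $E$ in the $r$ variable.

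The key steps, in order, are as follows. First, apply \eqref{inclusion-utile} to obtain $\alpha>1$ and $0<r_1<1$ with $\{\gamma_z(x,r'):z\in L,\ r_1<r'<1\}\subset D_\alpha(\mathbb{B}_N)$. Second, since $h$ has $K$-limit a.e., the set of $x\in b\mathbb{B}_N$ where the $K$-limit exists has full measure; moreover on that set one has pointwise convergence $h(w)\to\tilde h(x)$ as $w\to x$ within $D_\alpha(x)$. The subtlety is that pointwise convergence of this kind does not by itself yield joint continuity of the extension on $E_L^*$. To upgrade it, I would invoke a quantitative/uniform version: by Egorov's theorem applied to a suitable countable family of "discretized" approach parameters (e.g. restricting to $w\in D_\alpha(x)$ with $1-|w|\geq\delta$ for rational $\delta$), one extracts a compact $E\subset b\mathbb{B}_N$ with $m(b\mathbb{B}_N\setminus E)<\epsilon$ on which the convergence $h(w)\to\tilde h(x)$ is uniform in $x\in E$ and, simultaneously, $\tilde h$ restricted to $E$ is continuous (this last is Lusin's theorem, which can be absorbed into the same small-measure exception). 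Third, with $E$ so chosen, one checks directly that $\tilde h$ extended to $\mathbb{B}_N\cup E$ is continuous at every point of $E_L^*$: points of $E_L^*$ lying in $\mathbb{B}_N$ pose no problem since $h$ is already continuous there; for a point $x\in E$ (attained as $\gamma_z(x,1)$), given a sequence $\gamma_{z_k}(x_k,r_k)\to x$ with $z_k\in L$, $x_k\in E$, one uses that $\gamma_{z_k}(x_k,r_k)\in D_\alpha(x_k)$ for $r_k$ near $1$, that $x_k\to x$, and the uniform convergence on $E$ together with the continuity of $\tilde h|_E$ to conclude $\tilde h(\gamma_{z_k}(x_k,r_k))\to\tilde h(x)$. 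The continuity of $(z,x,r)\mapsto\gamma_z(x,r)$ ensures $x_k\to x$ whenever $\gamma_{z_k}(x_k,r_k)\to x$ and $r_k\to 1$, using that $\gamma_z(\cdot,1)=\mathrm{id}$ and a compactness argument on $L$.

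The main obstacle I expect is the passage from almost-everywhere existence of $K$-limits to a genuinely \emph{uniform} (hence continuity-producing) statement on a large compact subset $E$ of $b\mathbb{B}_N$. One must be careful that the "bad directions" — sequences in $D_\alpha(x_k)$ with $x_k\to x\in E$ but $x_k\neq x$ — are controlled, which is why it is essential to enlarge slightly the approach region (replace $\alpha$ by a somewhat larger $\alpha'$) so that $\gamma_{z_k}(x_k,r_k)\in D_\alpha(x_k)$ forces $\gamma_{z_k}(x_k,r_k)\in D_{\alpha'}(x)$ for $x_k$ close to $x$; this is a standard geometric comparison of Kor\'anyi regions with nearby vertices. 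Once that comparison is in place, the argument is a clean combination of Egorov, Lusin, and the defining property of $K$-limits, and the remaining verifications are routine. I would handle the part of $E_L^*$ where $r$ ranges over $[0,r_1]$ separately: there $\gamma_z(x,r)$ stays in a fixed compact subset of $\mathbb{B}_N$ (by continuity and compactness of $E\times[0,r_1]\times L$), on which $h$ is uniformly continuous, so no removal of measure is needed for that piece.
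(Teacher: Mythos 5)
Your argument is correct and is essentially the proof the paper intends: the paper's own proof simply remarks that, thanks to \eqref{inclusion-utile}, the argument is identical to that of Bayart's Lemma 1, which is exactly the Egorov--Lusin--Kor\'anyi-region-comparison scheme you reconstruct (including the separate treatment of the compact piece $r\in[0,r_1]$). One cosmetic slip: in the Egorov step the discretized suprema should be taken over $w\in D_\alpha(x)$ with $1-|w|\le\delta$ (the tail near the boundary), not $1-|w|\ge\delta$.
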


\begin{proof}Up to \eqref{inclusion-utile}, it is identical to that of \cite[Lemma 1]{Bay}.
\end{proof}

\begin{lemme}\label{lemma-2-ball}Let $v$ be a weight, $L$ a compact subset of $\B_N$, and $h$ a continuous function on $b\mathbb{B}_N$. For every $\epsilon>0$, there exist a compact subset $E$ of $b\mathbb{B}_N$ and a function $f$ holomorphic in $\mathbb{B}_N$, continuous on $E_L^*$, such that:
	\begin{enumerate}
		\item $m(\B_N \setminus E)<\epsilon$;
		\item $\vert f(x)-h(x) \vert<\epsilon$ for any $x\in E$;
		\item $\Vert f-g\Vert< \epsilon$.
	\end{enumerate}
\end{lemme}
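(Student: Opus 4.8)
The plan is to combine Corollary~\ref{approx-result} with a classical weighted polynomial approximation estimate on the ball, in the spirit of \cite[Lemma 3]{Bay}. First I would reduce to the situation where $g$ is a polynomial: since polynomials are dense in $H_v^0$, it suffices to prove the statement when $g=P$ is a polynomial, and then a further perturbation argument (exactly as in \cite{Bay}) recovers the general $g\in H_v^0$. Fix such a polynomial $P$ and $\epsilon>0$. The target function $f$ will be sought of the form $f = P + \delta\, q\cdot F$, where $q$ is a polynomial vanishing to high order at $0$ (so as to keep $\Vert f-P\Vert_{H_v^0}$ small: near $0$ the factor $q$ is tiny, while away from $0$ one uses that $v\to\infty$ together with a suitable modulus control), $F$ is supplied by the approximation result, and $\delta>0$ is a small scaling parameter. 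The role of $q$ is precisely the classical trick making weighted sup-norms small while not destroying boundary values — this is where $H_v^0$ (rather than $H^\infty$) is essential, and it is the analogue of the weight manipulation in Bayart's Lemma~3.

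Next I would invoke Corollary~\ref{approx-result} (with $D=\B_N$) applied to the data: the compact $\mathcal O(\B_N)$-convex set $K=\overline{r\B_N}$ for an appropriate $r<1$ coming from \eqref{inclusion-utile}, the continuous function $\varphi=h$ on $b\B_N$, and the holomorphic function $h_0\equiv 0$ (or a rescaling thereof) on a neighbourhood of $K$. This produces, for all $r'$ close enough to $1$, a compact set $E\subset b\B_N$ with $m(b\B_N\setminus E)<\epsilon$, and a holomorphic $F$ on $\B_N$ with $\sup_E|F\circ\gamma_z(x,r')-h(x)|$ small and $F$ uniformly small on $K$. Combining with the factor $q$ and the choice of $\delta$, the three conclusions of Lemma~\ref{lemma-2-ball} follow: (1) is just the measure estimate on $E$; (2) comes from $F\circ\gamma(x,r')\approx h(x)$ on $E$ together with $P$ being negligible compared to $h$ after the rescaling, or rather from writing $f(x)-h(x)$ and estimating each term; and (3) $\Vert f-g\Vert<\epsilon$ is the weighted estimate secured by the smallness of $q$ near $0$ and of $\delta F$ on an annulus, plus the density reduction. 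The continuity of $f$ on $E_L^*$ is inherited from that of $F$ on the relevant region once one notes that $E_L^*$ stays inside a Kor\'anyi region $D_\alpha(\B_N)$ by \eqref{inclusion-utile}, together with Lemma~\ref{lem:Bay-lemma1} applied to the (continuous, hence $K$-limited a.e.) function $h$ extended appropriately.

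The main obstacle I anticipate is the bookkeeping in the weighted estimate (3): one must choose the order of vanishing of $q$, the radius $r$, the parameter $\delta$, and the smallness thresholds in Corollary~\ref{approx-result} in the right order so that the $H_v^0$-norm of $f-g$ is genuinely controlled \emph{globally} on $\B_N$ — near the boundary one is at the mercy of the growth of $F$, which is only holomorphic, not bounded, so one really leans on $v(r)\to\infty$ to absorb it, exactly as in \cite[Lemma 3]{Bay}. A secondary technical point is checking that the compact set $K$ and the domains $D_{r'}=\B_{r'}$ produced by $\gamma_z$ are admissible inputs for Corollary~\ref{approx-result} uniformly in $z\in L$; this is where the assumed joint continuity of $(z,x,r)\mapsto\gamma_z(x,r)$ and of $(z,r)\mapsto\partial\gamma_z/\partial r$ is used, guaranteeing a single $r_0$ valid for all $z\in L$ and the inclusion \eqref{inclusion-utile}. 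Apart from these, the argument is a faithful transcription of Bayart's, with Corollary~\ref{approx-result} replacing the Mergelyan-type input of \cite{HakSib}, so I would not expect further surprises.
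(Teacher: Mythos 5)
Your overall architecture (reduce to a polynomial $g$, replace the Hakim--Sibony/Iordan ingredient of \cite[Lemma 3]{Bay} by the paper's own approximation machinery, use \eqref{inclusion-utile} for the continuity on $E_L^*$) matches the intent of the paper, but the specific tool you choose is the wrong one and two of the three conclusions do not come out of your scheme. The paper substitutes Lemma \ref{lem:cups} (with $V=\B_N$ and $U=\C^N$) for \cite[Lemma 3]{Iordan}, \emph{not} Corollary \ref{approx-result}. The difference matters: Corollary \ref{approx-result} only controls the approximant at the interior points $\gamma(x,r)$ for a single $r<1$ and produces a bare element of $\mathcal O(\B_N)$, whereas conclusion (2) here asks for $\vert f(x)-h(x)\vert<\epsilon$ at points $x\in E\subset b\B_N$, and the statement asks for continuity of $f$ on $E_L^*$, a set which contains those boundary points. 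The closed caps $\Gamma_1,\dots,\Gamma_n$ of Lemma \ref{lem:cups} are compact subsets of $\overline{\B_N}$ carrying the boundary set $E=\Gamma\cap b\B_N$, and the Oka--Weil theorem applied on the polynomially convex compact $\overline{\rho\B_N}\cup\Gamma$ yields a \emph{polynomial} approximant, which is automatically continuous on $\overline{\B_N}\supset E_L^*$ and close to $h$ on $E$ itself. Your $F$ from Corollary \ref{approx-result} has neither property, and Lemma \ref{lem:Bay-lemma1} (which extends the \emph{target} function, not the approximant) cannot repair this.

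The weighted estimate is also not salvaged by the ansatz $f=P+\delta\, q F$. If $\delta$ is chosen small enough to tame $\Vert \delta q F\Vert$, then $f\approx P\approx g$ on the boundary set and conclusion (2) fails; and the claim that $v(r)\to\infty$ ``absorbs'' the growth of $F$ near $b\B_N$ is false, since a function that is merely holomorphic on $\B_N$ can grow faster than any prescribed weight. The mechanism in \cite[Lemma 3]{Bay} is different: the approximant comes with an a priori \emph{global} sup-norm bound on $\overline{\B_N}$ depending only on $\sup|h|$ and $\sup_{\overline{\B_N}}|g|$ (in \cite{HakSib,Iordan} this bound is part of the statement of the approximation theorem), and the inner radius $\rho$ is chosen \emph{first} so that $v(\rho)$ exceeds that bound divided by $\epsilon$; the region $\rho<|z|<1$ is then handled by the size of $v$ against a fixed bound, with no rescaling by $\delta$ and no factor $q$ vanishing at the origin. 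When Lemma \ref{lem:cups} is substituted for the Mergelyan-type input, one must still secure such a global bound for the approximant; your order of choices ($K$, then $F$, then $\delta$ and $q$) leaves $\sup_{\rho<|z|<1}|f(z)-g(z)|/v(|z|)$ entirely uncontrolled.
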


\begin{proof}The proof works the same as that of \cite[Lemma 3]{Bay}. However, instead of \cite[Lemma 3]{Iordan} (see also \cite{HakSib}), which is the main ingredient of \cite[Lemma 3]{Bay}, we observe that one can use Lemma \ref{lem:cups} (with $V=\B_N$ and $U=\C^N$), whose proof is rather elementary.
\end{proof}

\begin{rem}It would be interesting to know whether Theorem \ref{thm-strength-ball} extends to any strictly pseudoconvex domain. The most difficult point would consist in controlling the growth of the functions near the boundary. The strategy developed in \cite{CharpDupMoun} might be useful.
\end{rem}

Proceeding as for $\mathcal{V}$ in Proposition \ref{lineability-V}, we can prove the following.

\begin{prop}Let $v$ be a weight and let $(r_j)$ be a sequence in $(0,1)$ with $r_j\to 1$. $\mathcal{W}(r_j)\cap H_v^0$ is densely lineable and spaceable in $H_v^0$. 
\end{prop}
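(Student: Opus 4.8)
The plan is to mimic the two-part argument used for $\mathcal{V}$ in Proposition \ref{lineability-V}, replacing the sets $\Gamma_n$ (and the condition $\sup_{\Gamma_n}|f-c_j|<1/k$) by the open sets $U(j,k,l,s)$ of Theorem \ref{thmBrestated}, and carrying everything out in the Banach space $H_v^0$ rather than the Fr\'echet space $\mathcal{O}(D)$. The key point that makes this transposition legitimate is that the density of each $U(j,k,l,s)$, which comes from Corollary \ref{approx-result} (or rather its $H_v^0$-analogue via Lemma \ref{lemma-2-ball}), is a \emph{local} statement: one approximates on a compact set $K\subset D$ together with a boundary compactum $E$, and by Lemma \ref{lemma-2-ball} this approximation can be done while keeping the $H_v^0$-norm of the perturbation small. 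Thus, exactly as in the proof of Theorem \ref{thmBrestated}, the residual set $\mathcal{W}(r_j)\cap H_v^0=\bigcap_{j,k,l,s}U(j,k,l,s)$ is a dense $G_\delta$ in $H_v^0$, and moreover, for any increasing sequence $(n_i)$ of indices one still gets that the corresponding ``thinned'' intersection $\mathcal{W}((n_i))$ is residual in $H_v^0$, because Corollary \ref{approx-result}/Lemma \ref{lemma-2-ball} allow one to hit any prescribed index as large as one wishes.

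For the dense lineability, I would reproduce verbatim step (1) of Proposition \ref{lineability-V}. Fix a countable neighbourhood basis $(W_l)$ of $0$ in $H_v^0$ (using that $H_v^0$ is a separable Banach space and that polynomials are dense in it). Build inductively functions $f_l\in \mathcal{W}(r_j)\cap H_v^0$ and nested subsequences of indices so that $f_0\in \mathcal{W}(r_j)\cap W_0$, and given $f_l$ one passes to a subsequence of indices along which $\|f_l\|$-type quantities (here: $\sup$ of $|f_l|$ on the relevant boundary compacta, or more precisely the approximation defects entering $U(j,k,l,s)$) tend to $0$, then picks $f_{l+1}$ in the correspondingly thinned residual set intersected with $f_l+W_{l+1}$. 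The span $E=\mathrm{span}(f_l:l\ge 0)$ is dense, and a finite combination $f=\sum_{l\le M}a_lf_l$ with $a_M\neq 0$ lies in $\mathcal{W}(r_j)\cap H_v^0$ because along the subsequence adapted to index $M$, the tail terms $f_0,\dots,f_{M-1}$ become negligible while $a_Mf_M$ approximates the prescribed data (one uses here that $h_j/a_M$ ranges over the same dense family as $h_j$, up to rescaling, exactly as in the $\mathcal{V}$ argument). The only mild point to check is that ``becoming negligible on boundary compacta'' combines correctly with the a.e.\ convergence statement defining $\mathcal{W}(r_j)$; but this is handled by the same $\liminf$-of-sets trick ($E=\bigcup_n\bigcap_{k\ge n}E_k$) used at the end of the proof of Theorem \ref{thmBrestated}, applied to $f$ directly.

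For the spaceability, I would likewise transpose step (2) of Proposition \ref{lineability-V}, i.e. Menet's scheme. One re-indexes the countably many ``target data'' (a dense sequence $(c_k)$ in $\mathbb C$, or equivalently the pairs $(h_j,U(j,k,l,s))$) into a double sequence, fixes a basic sequence $(e_n)$ in $H_v^0$ generating an infinite-dimensional subspace with controlled basic constants on the exhausting norms (such a sequence exists in any separable Banach space by \cite[Lemma 1.7]{Menet}, applied to $H_v^0$), fixes a summable sequence $(\epsilon_n)$, and builds by induction families $(f_{i,j})$ and indices satisfying the four conditions (i)--(iv) of Proposition \ref{lineability-V}, now with $\sup_{\overline{D_i}}$ replaced by the $H_v^0$-norm (which dominates sup-norms on compacta up to the weight) and $\sup_{\Gamma_{k,j}}$ by the relevant boundary-approximation conditions supplied by Lemma \ref{lemma-2-ball}. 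Then $f_j:=\sum_{i\ge j}f_{i,j}+e_j$ defines a basic sequence equivalent to $(e_j)$ by \cite[Lemma 1.9]{Menet}, $E:=\overline{\mathrm{span}}(f_j)$ is closed and infinite-dimensional, and any $f=\sum_n a_nf_n\in E\setminus\{0\}$ with $a_N$ the first nonzero coefficient (normalised to $1$, with $|a_n|\le K$ by \cite[Theorem 1.10]{Menet}) satisfies the required approximations along the $N$-th row of indices, by the same telescoping estimates as in Proposition \ref{lineability-V}, converted into an a.e.\ statement at the end as above.

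The main obstacle I anticipate is bookkeeping rather than substance: one must make sure that the approximation lemma one invokes genuinely controls the $H_v^0$-norm of the perturbation (not merely sup-norms on compacta), so that the inductive constructions converge \emph{in} $H_v^0$ and the resulting sums still lie in $H_v^0$; this is precisely what Lemma \ref{lemma-2-ball}(3) guarantees, and it is the reason the statement is phrased for $H_v^0$. A secondary technical point is the passage from ``$\sup$ on boundary compacta small'' to ``a.e.\ convergence''; but since $\mathcal{W}(r_j)$ is \emph{defined} through the sets $U(j,k,l,s)$ and the final extraction argument of Theorem \ref{thmBrestated} already performs exactly this passage for an arbitrary element of the $G_\delta$, it applies unchanged to each element of the dense subspace or of the closed subspace constructed above. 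Hence no genuinely new idea beyond Proposition \ref{lineability-V} and Theorem \ref{thmBrestated} is needed, which is why the details are left to the reader.
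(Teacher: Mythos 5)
Your proposal is correct and follows essentially the same route as the paper, which itself simply states that the result is obtained by ``proceeding as for $\mathcal{V}$ in Proposition \ref{lineability-V}'' combined with the proof of Theorem \ref{thmBrestated}. Your elaboration correctly identifies the two points that make the transposition work: the $H_v^0$-norm control supplied by Lemma \ref{lemma-2-ball}(3), and the final extraction argument converting approximation on boundary compacta into a.e.\ convergence.
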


\subsubsection*{A concluding remark}We recall that $\D$ stands for the unit disc in the complex plane. In the sequel we say that $f\in \mathcal{O}(\D)$ satisfies property ($\mathcal{P}$) if given any $K\subsetneq b\D$ and any $h\in \mathcal{C}(b\D)$, there exists $(r_j) \in [0,1)$ such that $f_{r_j}\to h$ uniformly on $K$ as $j\to \infty$. It is easily checked that if $f$ has ($\mathcal{P}$), then given any $h\in \mathcal{C}(b\D)$, $f_{r_j}$ converges pointwise on $b\D$ to $h$ for some sequence $(r_j)$, and that $f$ has maximal cluster sets along any path to $b\D$ (with finite or infinite length) $\gamma$ to $b\D$. It is proved in \cite{charp} that the set of functions satisfying ($\mathcal{P}$) is a dense $G_{\delta}$-subset of $\mathcal{O}(\D)$. In contrast, as we have already mentioned, there do not exist an $f\in \mathcal{O}(\B_N)$, $N\geq 2$, such that $f\circ \gamma$ has dense range for any path $\gamma$ to $b\B_N$ \cite{Glob-Stout}.

\medskip{}

Furthermore, one can show that there cannot exist $f\in \mathcal{O}(\B_N)$, $N\geq 2$, such that given any $h\in \mathcal{C}(b\B_N)$, $f_{r_j}\to h$ pointwise on $b\B_N$, for some $(r_j)\subset [0,1)$. Indeed, a standard argument using the Baire Category Theorem yields that if $f_{r_j}(x)\to h(x)$ for any $x\in b\B_N$, then it is uniformly bounded on some open subset of $b\B_N$. Thus, for some cap $\Gamma \in \B_N$, $f_{r_j}$ has to be bounded on $\Gamma \cap b\B_N$. Now, by the maximum modulus principle, $f_{r_j|L}$ is also bounded on $L\cap \Gamma$ uniformly for any complex affine line $L$ intersecting $\Gamma$. Therefore $f_{r_j}$ must be bounded on $\Gamma$, hence $f$ itself has to be bounded on a non-empty open set $\B_N \cap B_x$, for some open ball $B_x$ centred at some $x\in b\B_N$. This of course implies that for some $h\in \mathcal{C}(b\B_N)$, there exists no sequence $(r_j)\subset [0,1)$ such that $f_{r_j}\to h$ pointwise on $b\B_N$, a contradiction. Observe that this argument shows that the pointwise convergence of $f_{r_j}$ to any $h$ for some $(r_j)$ cannot even hold on a given \emph{open subset} of $b\B_N$.

\medskip{}

However, it is possible to prove the following. For $x\in b\B_N$, we denote by $L_x$ the complex line passing through $x$ and the origin, and by $bL_x$ the set $L_x\cap b\B_N$.

\begin{thm}\label{last-thm-sim}Let $(x_i)_{i\geq 1}$ be a sequence in $b\B_N$. There exists a dense $G_{\delta}$-subset $\mathcal{U}$ of $\mathcal{O}(\B_N)$ such that every $f\in \mathcal{U}$ satisfies that, given any $h_i\in \mathcal{C}(bL_{x_i})$ and any compact set $K_i\subsetneq bL_{x_i}$, $i\geq 1$, there exists a sequence $(r_j)$ in $[0,1)$ such that for any $i\geq 1$, $f_{r_j}\to h_i$ uniformly on $K_i$.
\end{thm}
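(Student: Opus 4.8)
The plan is to run a Baire-category argument over the countable set of complex lines $\{L_{x_i}\}$ simultaneously, using a one-dimensional-type approximation on each slice together with the Oka--Weil theorem to glue. First I would fix a dense sequence $(h^{(m)})_m$ in $\bigcup_i \mathcal{C}(bL_{x_i})$ — more precisely, for each $i$ a countable dense family $(h_i^{(m)})_m$ in $\mathcal{C}(bL_{x_i})$ together with an exhaustion of $bL_{x_i}$ by compact proper subsets $(K_i^{(m)})_m$. For a finite tuple of indices $i\le p$, prescribed data $h_i^{(m_i)}$ on $K_i^{(m_i)}$, an error $1/k$, and a compact $\mathcal{O}(\B_N)$-convex set $L\subset \B_N$, I would define the open set
\[
U\bigl(p,(m_i),k,L\bigr)=\bigcup_{0<r<1}\Bigl\{f\in\mathcal{O}(\B_N):\ \sup_{1\le i\le p}\ \sup_{x\in K_i^{(m_i)}}\bigl|f(rx)-h_i^{(m_i)}(x)\bigr|<\tfrac1k\Bigr\},
\]
and take $\mathcal{U}$ to be the countable intersection of all such sets as the parameters vary. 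That $\mathcal{U}$ consists of functions with the stated property is a routine diagonal extraction once density of each $U$ is known, so the crux is the density of a single $U\bigl(p,(m_i),k,L\bigr)$.

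The key approximation step is as follows. Given $h\in\mathcal{O}(\B_N)$, a compact $\mathcal{O}(\B_N)$-convex $L$, and $\epsilon>0$, I want $f\in\mathcal{O}(\B_N)$ with $|f-h|<\epsilon$ on $L$ and $|f(rx)-h_i^{(m_i)}(x)|<\epsilon$ on $K_i^{(m_i)}$ for all $i\le p$, for a suitable $r<1$. Choose $r$ close enough to $1$ that $rL$ — abuse of notation for the dilate; really pick $r$ so that $L\subset r\B_N$ and the sets below are disjoint — and so that the arcs $rK_i^{(m_i)}\subset L_{x_i}$ are pairwise disjoint compact subsets of $\B_N\setminus L$. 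Each $rK_i^{(m_i)}$ is a compact proper subset of the circle $r\,bL_{x_i}$, hence is polynomially convex in the line $L_{x_i}\cong\C$; since the $x_i$'s give distinct complex lines, I would invoke Corollary~\ref{cor:pol} (the iterated Kallin lemma) to check that $L\cup\bigcup_{i\le p} rK_i^{(m_i)}$ is polynomially convex — pairs of slices lying on distinct lines through the origin are separated by a suitable linear functional, and each slice is separated from $L$ because $L\subset$ a smaller ball. On this polynomially convex set define $g$ to equal $h$ near $L$ and, near $rK_i^{(m_i)}$, the function $z\mapsto h_i^{(m_i)}(z/r)$ transported along the line (extended holomorphically off the line to a neighbourhood, e.g. by composing with the linear projection onto $L_{x_i}$); $g$ is holomorphic on a neighbourhood of the polynomially convex set, so Oka--Weil yields the desired $f$, and density follows since $(h^{(m_i)})$ and $(K^{(m_i)})$ were chosen dense/exhausting and $\mathcal{O}(\B_N)$-polynomials are dense.

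The step I expect to be the main obstacle is precisely the polynomial convexity of the union $L\cup\bigcup_{i\le p} rK_i^{(m_i)}$: one must separate the arc slices from each other and from $L$ by holomorphic (here, polynomial or even linear) functions so that Corollary~\ref{cor:pol} applies. Separating two proper subarcs of circles lying on distinct complex lines through $0$ is plausible via a linear functional vanishing on one line but not the other, but one has to be careful because these lines all meet at the origin, so the separating polynomial cannot be purely linear if the arcs come close to $0$; one remedy is to first shrink so that each $rK_i^{(m_i)}$ stays in the region $|z|>1-\delta$ while $L$ stays in $|z|<1-2\delta$, separating $L$ from all slices by $|z|^2$ (composed with a suitable polynomial), and then separate slice $i$ from slice $i'$ using a linear functional adapted to the pair of lines restricted to that annular region — a finite (depending on $p$) iteration of Kallin pairings. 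Once this polynomial-convexity bookkeeping is arranged, the remainder is the standard Oka--Weil-plus-Baire machinery already used for Theorems~\ref{theo-finite-path-pseudoconvex} and~\ref{thmBrestated}, together with the diagonalization exactly as in the proof of Theorem~\ref{thmBrestated}.
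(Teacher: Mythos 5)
Your overall strategy is the paper's: reduce to finitely many lines, discretize the data, and prove density of each open set by gluing the target values on boundary pieces to the given function on a large compact set via a Kallin-type polynomial-convexity argument followed by Oka--Weil. The paper implements the gluing differently --- it covers the arcs $K_i$ by finitely many \emph{closed caps} of $\overline{\B}_N$, pairwise meeting in at most one point and disjoint from $\rho\overline{\B}_N$, and invokes the convex-body version of Kallin's lemma (Lemma \ref{lemma-Kallin-last}) --- whereas you work directly with the one-dimensional slices $rK_i^{(m_i)}$. Your variant is viable: a compact proper subset of a circle inside a complex line through $0$ is polynomially convex in $\C^N$, and for the separation required by Corollary \ref{cor:pol} the single linear functional $z\mapsto\langle z,x_j\rangle$ already does everything at once: it maps $rK_j^{(m_j)}$ onto a proper compact subset of the circle $\{|w|=r\}$ (which is its own polynomial hull), while it maps $\rho\overline{\B}_N$ and every other slice $rK_i^{(m_i)}$ ($i\neq j$) into the closed disc of radius $\max(\rho,\,r\max_{i\neq j}|\langle x_i,x_j\rangle|)<r$, since $|\langle x_i,x_j\rangle|<1$ for distinct lines. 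Your worry that ``the lines all meet at the origin'' is unfounded --- the slices sit at distance exactly $r$ from $0$ --- and your proposed remedy is actually an error: $|z|^2$ is not holomorphic, so it cannot serve as the separating function in Kallin's lemma or in Corollary \ref{cor:pol}. Drop that patch; the linear functionals suffice.

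The one genuine gap is the holomorphic datum on the slices. You cannot ``extend $h_i^{(m_i)}(z/r)$ holomorphically off the line by composing with the linear projection'': if $h_i^{(m_i)}$ is merely continuous on the circle $bL_{x_i}$, this composition is not holomorphic on any neighbourhood of $rK_i^{(m_i)}$, and Oka--Weil has nothing to approximate. You must first invoke the one-variable Mergelyan (Lavrentiev) theorem: a proper compact subset of a circle has connected complement and empty interior, so every continuous function on it is a uniform limit of polynomials; hence one may assume from the outset that each $h_i^{(m)}$ is a polynomial on $\C^N$, which is exactly what the paper does. Two smaller points of bookkeeping: a circle admits no exhaustion by proper compact subsets in the usual increasing sense, so your countable family $(K_i^{(m)})_m$ should be built as in the paper, from exhaustions of $bL_{x_i}\setminus\{y\}$ over a dense sequence of excluded points $y$, so that every compact $K\subsetneq bL_{x_i}$ is contained in some $K_i^{(m)}$; and (as both your argument and the paper's implicitly require) the lines $L_{x_i}$ must be pairwise distinct, since otherwise the conclusion is vacuous for conflicting data on overlapping $K_i$'s.
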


Observe that the sequence $(r_j)$ does not depend on $i\geq 1$. For the proof, we will need another particular case of Kallin's lemma (see \cite[Page 63]{Stout}).

\begin{lemme}\label{lemma-Kallin-last}Let $K$ and $L$ be two compact convex subsets of $\C^N$ that intersects at one point. If there exists a linear functional $\varphi$ on $\C^N$ such that $\Re{\varphi}\leq 0$ on $K$ and $\Re{\varphi}\geq 0$ on $L$, and $\Re{\varphi^{-1}(0)}$ meets $K$ only in a point, then $K\cup L$ is polynomially convex.
\end{lemme}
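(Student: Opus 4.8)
The plan is to recognize the statement as an instance of the general form of Kallin's lemma (the result underlying \cite[Theorem 1.6.19]{Stout} and discussed on \cite[Page 63]{Stout}): if $K,L$ are polynomially convex compacta and $p$ is a polynomial such that $\widehat{p(K)}\cap\widehat{p(L)}$ is a single point $w_0$ and the fibre $(K\cup L)\cap p^{-1}(w_0)$ is polynomially convex, then $K\cup L$ is polynomially convex. Here $K$ and $L$ are compact and convex, hence already polynomially convex, and the obvious candidate for $p$ is the given linear functional $\varphi$. So the whole task reduces to verifying the two hypotheses of this lemma for $p=\varphi$.

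First I would locate the meeting point. Write $H=\{z\in\C^N:\Re\varphi(z)=0\}$ for the real hyperplane in the statement, and let $z_0$ be the unique point of $H\cap K$. Since $K\cap L$ is a single point $p_0$ with $\Re\varphi(p_0)\le 0$ (as $p_0\in K$) and $\Re\varphi(p_0)\ge 0$ (as $p_0\in L$), we get $\Re\varphi(p_0)=0$, so $p_0\in H\cap K=\{z_0\}$; thus $K\cap L=\{z_0\}$ and $w_0:=\varphi(z_0)$ is purely imaginary.

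Next I would analyse the images under $\varphi$ inside $\C$. Since $\varphi(K)\subset\{\Re w\le 0\}$ and the open right half-plane is connected, unbounded and disjoint from $\varphi(K)$, every bounded complementary component of $\varphi(K)$ must lie in the \emph{open} left half-plane $\{\Re w<0\}$; hence $\widehat{\varphi(K)}\subset\{\Re w\le 0\}$, and because $H\cap K=\{z_0\}$ forces $\varphi(K)\cap\{\Re w=0\}=\{w_0\}$, the hull $\widehat{\varphi(K)}$ meets the imaginary axis only at $w_0$. The symmetric argument gives $\widehat{\varphi(L)}\subset\{\Re w\ge 0\}$. Intersecting, $\widehat{\varphi(K)}\cap\widehat{\varphi(L)}\subset\widehat{\varphi(K)}\cap\{\Re w=0\}=\{w_0\}$, and since $w_0\in\varphi(K)\cap\varphi(L)$ this intersection equals exactly $\{w_0\}$, which is the first hypothesis.

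For the second hypothesis I would compute the fibre $(K\cup L)\cap\varphi^{-1}(w_0)$. As $\varphi^{-1}(w_0)\subset H$, the $K$-part is $K\cap\varphi^{-1}(w_0)\subset K\cap H=\{z_0\}$, i.e. exactly $\{z_0\}$; the $L$-part is $L\cap\varphi^{-1}(w_0)$, the intersection of the convex set $L$ with a complex affine hyperplane, hence convex and already containing $z_0$. Therefore the whole fibre equals the convex compact set $L\cap\varphi^{-1}(w_0)$, which is polynomially convex. Both hypotheses of Kallin's lemma now hold, and the conclusion follows. The main point -- and the only place the single-point assumption is genuinely used -- is precisely this reduction of the fibre to one convex set: without it the fibre would be a union of two convex compacta meeting at $z_0$, whose polynomial convexity is not automatic, so I expect the careful bookkeeping of where $H$ meets $K$ (needed both for the hull intersection and for the fibre) to be the only delicate part.
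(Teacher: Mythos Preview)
Your argument is correct. Note, however, that the paper does not actually prove this lemma: it is simply quoted as a known particular case of Kallin's lemma with a reference to \cite[Page 63]{Stout}. What you have written is therefore not a comparison target but rather a complete derivation supplying the details the paper omits.

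Two minor remarks on your write-up. First, since $\varphi$ is linear and $K,L$ are convex, the images $\varphi(K)$ and $\varphi(L)$ are already compact convex subsets of $\C$, hence equal to their own polynomial hulls; your complementary-component argument for $\widehat{\varphi(K)}\subset\{\Re w\le 0\}$ is correct but unnecessary --- one line suffices. Second, the version of Kallin's lemma you invoke (hulls of images meeting in a single point, with polynomially convex fibre over that point) is exactly the form appearing in Stout's discussion around page~63, so your reduction is precisely the intended one. Your identification of the fibre as the single convex set $L\cap\varphi^{-1}(w_0)$ --- using that $K\cap H=\{z_0\}$ collapses the $K$-part to a point already lying in the $L$-part --- is the key step, and you have it right.
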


From this lemma, it follows that the union of two closed caps in $\overline{\B}_N$ which intersect at a single point is polynomially convex.

\begin{proof}[Proof of Theorem \ref{last-thm-sim}]By the Baire Category Theorem and a diagonal argument, it is enough to prove the theorem for $(x_i)_i$ finite. Let us then fix $\{x_i:\,i=1,\ldots,m\}$ in $b\B_N$. Let us set a sequence $(h^j_1,\ldots h^j_m)$ dense in $\mathcal{C}(bL_{x_1})\times \ldots \times \mathcal{C}(bL_{x_m})$. By the one variable Mergelyan theorem, any continuous function on $bL_{x_i}$ can be uniformly approximated on any proper compact subset of $bL_{x_i}$ by polynomials on $\C^N$, so we can assume that each $h^j_i$ is a polynomial. Let us also fix a sequence $(y^s_1,\ldots,y^s_m)_s$ dense in $bL_{x_1}\times\ldots \times bL_{x_m}$ and let $(K_{1,n}^s,\ldots,K_{m,n}^s)_n$ be an exhaustion of compact subsets of $(bL_{x_1}\times\ldots \times bL_{x_m})\setminus \{y^s_1,\ldots,y_m^s\}$. Now, observe that
	\[
	\mathcal{U}=\bigcap_{j,n,s,k}\bigcup_{r\in [1-\frac{1}{k},1)}\bigcap_{i=1}^m\left\{f\in \mathcal{O}(\B_N):\,\sup_{K_{i,n}^s}|f_r(z)-h^j_i(z)|<\frac{1}{k}\right\}.
	\]
For $j,n,s,k$ fixed, the union on the right-hand side is clearly open in $\mathcal{O}(\B_N)$. To prove that it is dense, we can consider, for any $0<\rho<1$, finitely many closed caps $\Gamma_1,\ldots,\Gamma_{l_0} \subset \overline{\B}_N$ such that
\begin{enumerate}
	\item For any $1\leq l\leq l_0$, $\Gamma_l\cap \rho\overline{\B}_N=\emptyset$;
	\item For any $1\leq l'\leq l_0$, $\Gamma_{l'}\cap \bigcup_{l\neq l'}\Gamma_l$ contains at most one point;
	\item $K_{1,n}^s\cup \ldots \cup K_{m,n}^s\subset \bigcup _{l=1}^{l_0}\Gamma_l$.
\end{enumerate}
To finish, we use the fact that $\bigcup _{l=1}^{l_0}\Gamma_l \cup \rho\overline{\B}_N$ is polynomially convex, that we deduce from Corollary \ref{lemma-Kallin-last}.
\end{proof}

We shall mention that a function in $\mathcal{O}(\D)$ which satisfies property ($\mathcal{P}$) cannot have an arbitrary slow radial growth (\cite[Corollary 2.11]{charp}). As a consequence, and in contrast with Theorem \ref{thm-strength-ball}, a function in the class $\mathcal{U}$ cannot grow arbitrarily slowly either.

Finally, using the formalism and the ideas considered in this section, we shall add that Theorem \ref{last-thm-sim} can be stated for strictly pseudoconvex domains, up to adequate modifications.

\section*{acknowledgement}The authors are grateful to the referee for her/his careful reading of the manuscript.

\end{document}